\begin{document}
\newtheorem{lemme}{Lemma}[section]
\newtheorem{proposition}[lemme]{Proposition}
\newtheorem{cor}[lemme]{Corollary}
\numberwithin{equation}{section}
\newtheorem{theoreme}{Theorem}[section]
\theoremstyle{remark}
\theoremstyle{definition}
\newtheorem{definition}[lemme]{Definition}
\theoremstyle{remark}
\theoremstyle{remark}
\newtheorem{remark}[lemme]{Remark}

\newcommand{\C}{{\mathbb C}}
\newcommand{\N}{{\mathbb N}}
\newcommand{\Z}{{\mathbb Z}}
\newcommand{\R}{{\mathbb R}}
\newcommand{\T}{{\mathbb T}}
\newcommand{\spt}{\,\mathrm{supp}\,}
\renewcommand{\Re}{\;\mathrm{Re}\;}
\renewcommand{\Im}{\;\mathrm{Im}\;}
\title[Nonlinear scattering]
{Almost surely smoothed scattering for cubic NLS}
 \author{N. Burq}
\address{D\'epartment de Math\'ematiques Universit\'e Paris-Saclay, Bat. 307, 91405 Orsay Cedex
France}
\email{nicolas.burq@universite-paris-saclay.fr}
\author{H. Koch} 
\address{Mathematisches Institut, Endenicher Allee 60, Bonn University, Germany} 
\email{koch@math.uni-bonn.de}
 \author{N. Tzvetkov}
\address{ENS Lyon, 46, All\'ee d'Italie, 69 007 Lyon, France}
\email{nikolay.tzvetkov@ens-lyon.fr}
\author{N. Visciglia}
\address{Dipartimento di Matematica, Largo B. Pontecorvo 5, 56124, Universit\`a di Pisa, Italy}
\email{nicola.visciglia@unipi.it}
\date{\today}
 
\begin{abstract} We consider cubic NLS in dimensions $2,3,4$ and we prove
that almost surely solutions with randomized initial data  at low regularity scatter. Moreover, we establish some
smoothing properties of the associated scattering operator.
\end{abstract}
\maketitle

\noindent{\em \small{Keywords: NLS, nonlinear scattering, lens transform.}} \\
 {\em \small{Msc: 35Q55}}
\section{Introduction and main result}

In this paper we consider the cubic NLS
\begin{equation}\label{NLScubic}
\begin{cases}
i\partial_t u +\Delta u - |u|^2u=0, \quad (t,x)\in \R\times \R^n,\\
u(0,x)=u_0
\end{cases}
\end{equation}
in dimension $n=2,3,4$ and we are interested in the long-time description of the corresponding solutions. The proofs apply to more general situations, but for simplicity we mainly restrict ourselves to those three cases. 
Notice that cubic NLS, depending on the space dimension $n=2,3,4$, is respectively $L^2$-critical, $\dot H^\frac 12$ critical and $\dot H^1$-critical. 

Recall the classical notion of scattering
in $H^s(\R^n)$. Assume that \eqref{NLScubic} with $u_0\in H^s(\R^n)$ 
admits an unique global solution $u(t,x)$ belonging in some functional space embedded in ${\mathcal C}(\R;H^s(\R^n))$, then we say that $u(t,x)$ scatters in $H^s(\R^n)$ as  $t\rightarrow \pm \infty$ provided that:
\begin{equation}\label{Hssca}
\exists u_0^\pm \in H^s(\R^n) \hbox{ s.t. } \|e^{-it\Delta} u(t,x)- u_0^\pm\|_{H^s(\R^n)}
\overset{t\rightarrow \pm \infty}\longrightarrow 0.
\end{equation}
In the context of cubic NLS we recall that scattering in $H^s(\R^n)$ has been proved in \cite{Do} for any $s\geq 0$ and $n=2$, in \cite{GV} for any $s\geq 1$ and $n=3$, in \cite{RV} for any $s\geq 1$ and $n=4$.  It is worth mentioning that in dimension $n=3$ it is proved in \cite{CKSTT} that scattering 
occurs also in $H^s(\R^n)$ for a range of $s\in (s_0, 1)$ with $0<s_0<1$.
 For a general overview of the scattering issue for NLS we also quote the book \cite{Caz}
and the references therein.
In any case we point out that according to \eqref{Hssca} if the initial datum $u_0$ belongs to $H^s(\R^n)$, then the scattering stats $u_0^\pm$ have the same regularity and moreover the convergence of nonlinear waves to free 
waves occurs in the same topology.

In last decades a probabilistic approach allowed to give insight into the dynamics of nonlinear PDEs even for Sobolev regularity below the one allowed by the deterministic theory.
This point of view started in \cite{LRS}, has been further developed by \cite{B1}, \cite{B2}, and
then by many authors,  we quote \cite{BT2}, \cite{BT1},  \cite{CO}, \cite{NS} to mention a few of them. More recently, starting from \cite{BTT}, also the scattering theory has been enhanced 
thanks to the probabilistic approach. Typically, one can show, a.s. with respect to suitable non-trivial measures on the phase space of initial data, that the scattering operator $u_0\rightarrow u_0^\pm$
has a more precise structure, namely $u_0^\pm$ is a smoother perturbation of the initial datum $u_0$, and moreover the asymptotic convergence in \eqref{Hssca} occurs on a topology
stronger than the one of the initial datum.
This point of view has been developed, for example, in \cite{BT}, \cite{BTT}, \cite{C}, \cite{DLM}, \cite{Ma}, \cite{PRT}, \cite{SSW} and \cite{Sp}. In the sequel we shall follow the approach introduced in \cite{PRT}, whose randomization procedure was inspired by \cite{BL}.
\subsection{The randomization procedure and main result} In order to state our result, let us introduce the harmonic Sobolev spaces ${\mathcal H}^s(\R^n)$
associated with the following norm:
\begin{equation}
\|u\|_{{\mathcal H}^s(\R^n)}=\| H^\frac s2 u \|_{L^2(\R^n)},
\end{equation}
where $H$ denotes the harmonic oscillator, namely
\begin{equation}\label{harmosc}H=-\Delta + |x|^2.\end{equation}
Next, following \cite{PRT}, we introduce some probability measures ${\mathcal M}_n^s$ supported in the phase space ${\mathcal H}^s(\R^n)$.
Given $\{\lambda_k, k\geq 1\}$ the family of eigenvalues of $H$, repeated with their multiplicity and ordered in an increasing sequence, we introduce 
$$I(j)=\{k\in \N \hbox{ s.t. } 2j\leq \lambda_k<2(j+1)\}.$$
We also introduce the following subset of ${\mathcal H}^s(\R^n)$: 
\begin{multline}\label{As}{\mathcal A}^s(\R^n)=\Big \{\sum_{j\in \N} \sum_{k\in I(j)} c_k \varphi_k \in {\mathcal H}^s(\R^n)
\hbox{ s.t. } \\ |c_k|^2\lesssim \frac{1}{\# I(j)} \sum_{m\in I(j)} |c_m|^2, \forall k\in I(j), j\geq 1\Big \}
\end{multline} 
where $\{\varphi_k, k\in \N\}$ is an orthonormal basis of $L^2(\R^n)$ of eigenfunctions for $H$ associated with the eigenvalues $\lambda_k$.
Next we fix a probability space $(\Omega, {\mathcal F}, \mathbb P)$ and a family $g_k:\Omega\rightarrow \R$ of i.i.d. random variables, whose common law $\nu$ satisfies $\int_\R e^{cx} d\nu\leq e^{\kappa c^2}$ for every $c\in \R$ and some fixed $\kappa >0$. As a consequence the random variables $g_k$ are centered. Then for any  $\gamma\in {\mathcal A}^s(\R^n)$
we can define the probability measure $\mu_\gamma$ on $\mathcal H^s(\R^n)$ as the 
push-forward of the measure $(\Omega, {\mathcal F}, \mathbb P)$ via the map
$$\Omega\ni \omega\rightarrow \sum_{j\in \N} \sum_{k\in I(j)} c_k g_k(\omega) \varphi_k.$$
Notice that the map above takes value
in 
${\mathcal H}^s(\R^n)$$\mathbb P$ a.s. We can now introduce a family of measures on the phase space $\mathcal H^s(\R^n)$ that will be used along the rest of paper:
\begin{equation}
{\mathcal M}_n^s=\bigcup_{\gamma\in {\mathcal A}^s(\R^n)} \mu_\gamma.
\end{equation}
We have that a measure in ${\mathcal M}_n^s$ depends on the choice of the basis  $\{\varphi_k, k\in \N\}$. In addition,  one can show the following properties about the measures belonging to 
${\mathcal M}_n^s$ (see \cite{PRT}):
\begin{itemize}
\item assume $\gamma \in {\mathcal A}^s(\R^n)\setminus {\mathcal H}^{s+\varepsilon}(\R^n)$, then 
\begin{equation}\label{noderiv}
\mu_\gamma ({\mathcal H}^{s+\varepsilon}(\R^n))=0;\end{equation}
\item assume that $\gamma\in {\mathcal A}^s(\R^n)$, the corresponding Fourier coefficients $c_k$ of $\gamma$ (see \eqref{As}) are different from zero and the support of the law of $g_k$ is $\R$, then 
\begin{equation}\label{nontrivialmessure}
\mu_\gamma(B)>0, \quad \forall B \hbox{ open set in }  {\mathcal H}^s(\R^n).
\end{equation}
\end{itemize}
We now come to the main result of the paper.
\begin{theoreme}\label{4d}
Let $(n, s)\in \{2,3,4\}\times \R$ satisfy 
\begin{equation}\label{eq:sn} \left\{ \begin{array}{cl}   s>0 & \text{ if } n=2 \\
s\ge-\frac14 & \text{ if } n= 3\\
s\ge -\frac12 & \text{ if } n= 4 
\end{array}\right.
\end{equation} 
 then for every $\mu_\gamma\in {\mathcal M}^s_n$
there exists a set $\Sigma_n^s\subset {\mathcal H}^s(\R^n)$ with full measure w.r.t. $\mu_\gamma$
such that 
for every $u_0\in \Sigma_n^s$
the corresponding Cauchy problem \eqref{NLScubic} admits a unique global solution with the following structure
\begin{equation}\label{struc}
u(t,x)=e^{it\Delta} (u_0+ R_{u_0}(t,x)), \quad R_{u_0}(t,x)\in {\mathcal C}(\R; {\mathcal H}^1(\R^n)).
\end{equation} Moreover there exist 
\begin{equation}\label{trace} \lim_{t\to \pm \infty} R_{u_0}(t)=r_0^{\pm}\in {\mathcal H}^1(\R^n)\end{equation}  such that
\begin{equation}\label{convin}\|e^{-it\Delta} u(t,x)-u_0 -r_0^{\pm}\|_{{\mathcal H}^1(\R^n)}\overset {t\rightarrow \pm \infty} \longrightarrow 0.\end{equation}
More precisely we get as $t\rightarrow \pm \infty$ the following behavior:
\begin{equation} \label{eq:convergence}    \Vert e^{-it \Delta} u(t,x) -u_0 - r_0^{\pm} \Vert_{{\mathcal H}^1(\R^n)} = \left\{\begin{array}{cl}  O(|t|^{-1+\delta}) \quad \forall \delta>0 & \text{ if } n=2 \\
                          O(|t|^{-\frac32})  & \text{ if } n=3 \\
                          O(|t|^{-2}) & \text{ if } n=4. \end{array}\right.
                          \end{equation}    
\end{theoreme}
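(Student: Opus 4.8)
\emph{Reduction via the lens transform.} The plan is to turn the global-in-time problem \eqref{NLScubic} into a \emph{local}-in-time problem for cubic NLS with harmonic potential, through the lens (pseudo-conformal) transform. Recall that $u(t,x)$ solves \eqref{NLScubic} on $\R\times\R^n$ if and only if
\[
v(s,y)=(1+4t^2)^{\frac n4}\,e^{-i\frac{|x|^2t}{1+4t^2}}\,u(t,x),\qquad s=\tfrac12\arctan(2t),\quad y=\tfrac{x}{\sqrt{1+4t^2}},
\]
solves $i\partial_s v-Hv=(\cos 2s)^{n-2}|v|^2v$ on $s\in(-\tfrac\pi4,\tfrac\pi4)$ with $v(0,y)=u_0(y)$, where $H$ is the harmonic oscillator \eqref{harmosc}. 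Since this transform intertwines $e^{it\Delta}$ with $e^{-isH}$ and fixes the datum at $t=s=0$, the decomposition \eqref{struc} is equivalent to writing $v(s)=e^{-isH}u_0+w(s)$ with $w\in\mathcal C([-\tfrac\pi4,\tfrac\pi4];{\mathcal H}^1(\R^n))$ — continuity \emph{up to the endpoints} being precisely the analytic content of global existence and scattering — and the objects $R_{u_0}(t)$ and $r_0^\pm$ are read off from $w(s)$ and $w(\pm\tfrac\pi4)$. The harmonic Sobolev scale is forced on us here: it is preserved by $e^{-isH}$ and is the scale adapted to the randomization, which is built on eigenfunctions of $H$.

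\emph{Probabilistic estimates for the linear harmonic flow.} For $u_0=\sum_k c_kg_k(\omega)\varphi_k\in{\mathcal H}^s$ in the support of $\mu_\gamma$, the next step is to record, on a full-$\mu_\gamma$-measure set $\Sigma_n^s$, gain-of-derivatives (Bourgain/Paley--Zygmund) bounds for $e^{-isH}u_0$, of the form $e^{-isH}u_0\in L^q\big((-\tfrac\pi4,\tfrac\pi4);W^{\sigma,r}(\R^n)\big)$ for suitable Strichartz-admissible pairs and exponents $\sigma$ essentially all the way up to $1$, well beyond the deterministic threshold $\sigma\le s$. The numerology in \eqref{eq:sn} is exactly what makes the relevant exponents attainable for $n=2,3,4$; the finite length of the time interval and, for $n=3,4$, the weight $(\cos 2s)^{n-2}$ give additional room near the endpoints.

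\emph{Fixed point with smoothing.} Write the Duhamel equation for $w$,
\[
w(s)=-i\int_0^s e^{-i(s-\tau)H}\Big[(\cos 2\tau)^{n-2}\,\big|e^{-i\tau H}u_0+w\big|^2\big(e^{-i\tau H}u_0+w\big)\Big]\,d\tau,
\]
and run a contraction in $\mathcal C([-\tfrac\pi4,\tfrac\pi4];{\mathcal H}^1)$ intersected with a suitable Strichartz space. Expanding the cube, the genuinely probabilistic term is the fully linear one $(\cos 2\tau)^{n-2}|e^{-i\tau H}u_0|^2e^{-i\tau H}u_0$: by the Leibniz rule, Hölder, and the estimates of the previous step it belongs to $L^1_s{\mathcal H}^1$ (equivalently, to a dual Strichartz space carrying one derivative), hence it contributes a term in $\mathcal C_s{\mathcal H}^1$; all mixed terms containing at least one factor $w$ are absorbed by the deterministic cubic theory, which is ${\mathcal H}^1$-subcritical for $n=2,3$ and ${\mathcal H}^1$-critical for $n=4$. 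In the critical dimension $n=4$ one cannot close by smallness of the (large, low-regularity) datum, so one first solves on short subintervals on which $e^{-isH}u_0$ is small in the relevant Strichartz norm, using the a priori bound on $\|v\|$ in ${\mathcal H}^1$ on a bounded interval, and then iterates finitely many times to cover $[-\tfrac\pi4,\tfrac\pi4]$. This is the main obstacle of the proof: one needs the harmonic probabilistic estimates to be strong enough to place the cube of the linear solution in $L^1_s{\mathcal H}^1$ up to the endpoints — genuinely tight at the borderline values in \eqref{eq:sn} — while organizing the critical case $n=4$ through the subinterval scheme. The outcome is $w\in\mathcal C([-\tfrac\pi4,\tfrac\pi4];{\mathcal H}^1)$ with, by the same Duhamel formula, an endpoint modulus $\|w(s)-w(\tfrac\pi4)\|_{{\mathcal H}^1}=O\big(|\tfrac\pi4-s|\big)$ for $n=3,4$ (using $(\cos 2\tau)^{n-2}\sim|\tfrac\pi4-\tau|^{n-2}$) and $O\big(|\tfrac\pi4-s|^{1-\delta}\big)$ for $n=2$, symmetrically at $-\tfrac\pi4$; uniqueness of $u$ is inherited from deterministic uniqueness in the Strichartz class in which $v$ lives.

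\emph{Back to $u$ and the decay rates.} Undoing the lens transform (using the standard commutation identities making it an isometry, up to the explicit conformal weights, between the relevant Sobolev-type norms, so that the weight $|x|^2$ in ${\mathcal H}^1$ is controlled under the change of variables) turns $w\in\mathcal C([-\tfrac\pi4,\tfrac\pi4];{\mathcal H}^1)$ into $R_{u_0}\in\mathcal C(\R;{\mathcal H}^1)$ with $R_{u_0}(t)\to r_0^\pm$ corresponding to $w(\pm\tfrac\pi4)$, which gives \eqref{struc}, \eqref{trace} and \eqref{convin}. Finally, combining the endpoint modulus of $w$ with the substitution $|\tfrac\pi4-s|\sim|t|^{-1}$ and the conformal weight $(1+4t^2)^{\frac{n-2}{4}}\sim|t|^{\frac{n-2}{2}}$ produces exactly the rate $|t|^{-n/2}$ of \eqref{eq:convergence}, with the $\delta$-loss for $n=2$ inherited from the $\delta$-loss in the endpoint modulus.
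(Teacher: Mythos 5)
Your skeleton (lens transform to the strip $(-\frac\pi4,\frac\pi4)$, a.s.\ smoothing for $e^{-isH}u_0$, perturbation $w=e^{-isH}u_0+v$ and a fixed point for $v$ in $\mathcal C\cap$ Strichartz) is the paper's strategy, but the two places where the actual work lies are missing. First, globalization up to $\tau=\pm\frac\pi4$: you appeal to ``the a priori bound on $\|v\|$ in $\mathcal H^1$ on a bounded interval'' without producing it. A contraction on the whole strip cannot close for large data even in the subcritical cases $n=2,3$, and iterating local solutions only reaches $\pm\frac\pi4$ if one has a uniform $\mathcal H^1$ bound; the conserved energy of the lens-transformed equation is unavailable because $u_0\notin\mathcal H^1$ (indeed $s<0$ is allowed for $n=3,4$, and $\mu_\gamma(\mathcal H^{s+\varepsilon})=0$). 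The paper's substitute is the energy estimate of Proposition \ref{energyestimate}: differentiating $\mathcal E(v)$, the $\partial_t v$ contributions cancel using the equation, leaving $\Im\int\cos^{n-2}(2t)\,H\xi\,|\xi+v|^2\overline{(\xi+v)}\,dx$, which after integration by parts is closed by Gr\"onwall using the \emph{weighted} a.s.\ bounds $\langle x\rangle^{\cdot}\xi,\ \langle x\rangle^{\cdot}\nabla\xi\in L^\infty$ of \eqref{eq:stoch3}--\eqref{eq:stoch4}; this is exactly where the thresholds \eqref{eq:sn} enter, and nothing in your sketch supplies it. Second, for $n=4$ your scheme (subintervals on which $e^{-isH}u_0$ is small in Strichartz, ``iterate finitely many times'') fails precisely because the problem is energy critical: an $\mathcal H^1$ bound does not control the local existence time nor the number of such intervals, since the critical Strichartz norm of $v$ itself could blow up before $\frac\pi4$. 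The paper instead writes $v=w+R$ with $w$ the \emph{deterministic} energy-critical solution with datum $v(\tau)$, imports the global $L^4_tL^8_x$ spacetime bound of Ryckman--Visan through the inverse lens transform, and proves a linearized stability estimate (Lemma \ref{stet}) by partitioning into finitely many intervals where $\|w\|_{L^4L^8}$ is small; this deterministic critical input is indispensable and absent from your argument.

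Your derivation of the rates \eqref{eq:convergence} is also incorrect. By \eqref{exp} and \eqref{confrel}, $e^{-it\Delta}u(t)-u_0-r_0^\pm=e^{i\tau H}v(\tau)-e^{\pm i\frac\pi4 H}v^\pm$ \emph{exactly}, and $e^{i\tau H}$ is an isometry on $\mathcal H^1$, so no conformal weight enters the conversion: the only dictionary is $|\pm\frac\pi4-\tau(t)|\sim|t|^{-1}$. Consequently one needs endpoint moduli $|\frac\pi4-\tau|^{1-\delta}$, $|\frac\pi4-\tau|^{3/2}$, $|\frac\pi4-\tau|^{2}$ for $n=2,3,4$, not the $O(|\frac\pi4-\tau|)$ you claim for $n=3,4$; the paper obtains them by estimating the Duhamel tail in a dual Strichartz norm, where the factor $\cos^{n-2}(2s)$ is integrated in time, e.g.\ $\big(\int_T^{\pi/4}\cos^2(2s)\,ds\big)^{1/2}\sim|\frac\pi4-T|^{3/2}$ for $n=3$, $\cos^2(2s)\lesssim|\frac\pi4-T|^2$ for $n=4$, and $|\frac\pi4-T|^{1/p}$ with $p\to1$ for $n=2$. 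Moreover the weight you invoke, $(1+4t^2)^{\frac{n-2}4}\sim|t|^{\frac{n-2}2}$, grows; multiplying an $O(|t|^{-1})$ error by it would worsen, not improve, the rate, so the claimed arithmetic leading to $|t|^{-n/2}$ does not stand.
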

\begin{remark} 
The proof applies to more general situations: 
\[ i \partial_t u + \Delta u = -|u|^{p-2} u \] 
for $ 2+\frac4n \le p \le 4$ in dimension $n=2,3,4$, the energy critical case $p= \frac{2n}{n-2}$ for $ n \ge 5$ and the energy subcritical case $2+\frac4n\le p <2+ \frac{4}{n-2}$ for $ n \ge 4$.  The lower bound for the power $p$ is optimal in view of the results \cite{IW}. For simplicity we focus on $p=3$ and $n=2,3,4$ and indicate the changes for the more general cases in remarks.
\end{remark} 

Our proof of Theorem~\ref{4d} also shows that our solutions can be obtained as limits of smooth solutions of \eqref{NLScubic} . More precisely, for $u_0\in \Sigma_n^s$, if $u_0^N = P_N u_0$ ($P_N$ being a spectral truncation of $u_0$ based on $H$)  and $u^N$ is the solution of \eqref{NLScubic} with initial data $u_0^N$ then 
\[ u^N \to u \qquad \text{ in } {\mathcal C}(\R, \mathcal{H}^s), \] 
more precisely 
\[  e^{-it \Delta} u^N(t)- u^N_0 \to   e^{-it\Delta } u(t)- u^0 \] 
in ${\mathcal C}(\R; \mathcal{H}^1(\R^n))$. The proof of this claim can be done as in \cite{Tz} (Theorem~2.7 of Chapter~2)  where a similar proof is given in the context of the nonlinear wave equation. 

Notice that in dimension $n=3,4$ we get scattering for data at negative regularity. Moreover, by \eqref{convin} we deduce that the convergence to the asymptotic states occurs in the strong topology ${\mathcal H}^1(\R^n)$.
Concerning our result for $n=2$ we point out that, although we don't treat supercritical initial data (recall that the $L^2$ deterministic scattering theory cubic NLS on $\R^2$ has been achieved in \cite{Do}),
interestingly we still get the strong convergence provided in \eqref{convin}. 

Theorem \ref{4d} extends in several directions, in the specific case of cubic NLS, some results proved in \cite{PRT}.
Indeed for $n=2$ our result is not new compared with \cite{PRT}, however we believe that our argument, based on a suitable energy estimate, provides a rather simple proof. In the case $n=3$, as already said, we can treat  data at negative regularity.
Finally in dimension $n=4$, when the equation is energy critical, as far as we can see the high-low method 
used in \cite{PRT} does not apply for globalization and hence a different argument is required. We also believe that our technique is rather robust and can be applied in a variety of situations including the case where the nonlinearity is not analytic. 

Let us finally mention that Theorem~\ref{4d} is not easily comparable with the results obtained in \cite{C}, \cite{DLM}, \cite{SSW}. From one hand the regularity assumptions in  Theorem~\ref{4d} are weaker compared to \cite{C}, \cite{DLM}, \cite{SSW}. 
On the other hand the localisation assumptions on the data in  Theorem~\ref{4d} are stronger compared to \cite{C}, \cite{DLM}, \cite{SSW}. 

\subsection{Lens transform and strategy of proof} Next we explain our strategy of proof. 
Following \cite{PRT}, the idea to establish Theorem \ref{4d} is to reduce the problem, via the lens transform, to the analysis
of suitable non autonomous cubic NLS on a finite strip of time.
Indeed let us associate with $u(t,x)$ solution to \eqref{NLScubic} the lens transform defined as follows:
\begin{equation}\label{eq:lens}
w(\tau,y)= \frac 1{\cos^\frac n2(2\tau)}
u\big (t(\tau), \frac y{\cos(2\tau)}\big )e^{-i|y|^2 t(\tau)}, \quad (\tau,y)\in (-\frac \pi 4, \frac \pi 4)\times \R^n
\end{equation}
where $$t(\tau)=\frac{\tan(2\tau)}{2}.$$
One can show that $w(\tau,y)$
solves the following nonlinear Cauchy problem:
\begin{equation}
\begin{cases}\label{lenstr} i\partial_\tau w- H w -\cos^{n-2}(2\tau) |w|^{2}w=0, \quad (\tau,y)\in (-\frac \pi 4,\frac \pi 4)\times \R^n\\
 w(0)=u_0, 
\end{cases}
\end{equation}
where $H=-\Delta + |y|^2$. 
We shall use also the following relation between $u$ and $w$
connected via \eqref{eq:lens} for a fixed $\tau$
\begin{equation}\label{exp}
e^{-it(\tau)\Delta} u(t(\tau))= e^{i\tau H} w(\tau), \quad \forall \tau.\end{equation}
In order to show this identity first notice that the lens transform of a solution to the linear Schroedinger equation solves  
the linear Schroedinger equation with harmonic oscillator with the same initial condition (notice that the lens transform is the identity at time $0$), 
and this fact readily implies by the definition \eqref{eq:lens} of lens transform
\begin{equation*}
M_{t(\tau)}\circ S_{\frac 1{\cos(2\tau)}}\circ e^{it(\tau)\Delta}=e^{-i\tau H},
\quad \forall \tau
\end{equation*}
where 
$S_\lambda f(x)=\lambda^\frac n2 f(\lambda x), \quad M_\mu f(x)=e^{-i\mu |x|^2}$. This implies identity \eqref{exp}.

Next notice that the solution $w$ to \eqref{lenstr} can be written as 
\begin{equation}\label{eq:decomp}w(\tau)=e^{-i\tau H} u_0+v(\tau)\end{equation}
where $v$ solves 
the following Cauchy problem
\begin{equation}
\begin{cases}\label{enery} i\partial_\tau v- H v = \cos^{n-2}(2\tau) |e^{-i\tau H}u_0+ v|^{2}( e^{-i\tau H}u_0 +v),
\\v(0)=0
\end{cases}
\end{equation}
on the strip $(\tau,y)\in (-\frac \pi 4,\frac \pi 4)\times \R^n$. Moreover we get
by \eqref{exp} and \eqref{eq:decomp}
\begin{equation}\label{confrel}e^{-it(\tau)\Delta} u(t(\tau))=u_0 +e^{i\tau H} v(\tau).
\end{equation}
Hence \eqref{struc} follows provided that $v(\tau)\in {\mathcal C}
((-\frac \pi 4, \frac \pi 4); {\mathcal H}^1(\R^n))$, indeed in this case we can select
$R_{u_0}(t)= e^{i\tau(t)H} v(\tau(t))$,
where $\tau(t)=\frac{\arctan(2t)}{2}$.
Moreover \eqref{trace}, \eqref{convin} 
follow provided that we prove:
\begin{equation}
v(\tau ,y)\overset{\tau \rightarrow {\pm \frac \pi 4}} \longrightarrow v^\pm \hbox{ in } {\mathcal H}^1(\R^n),
\end{equation}
in fact in this case we get \eqref{trace} and \eqref{convin} by choosing $r_0^\pm=e^{\pm i \frac \pi 4 H}v^\pm$.\\
Notice also that by \eqref{confrel} we get
\begin{equation}\label{imp...}e^{-it(\tau)\Delta} u(t(\tau))-u_0-r_0^\pm=e^{i\tau H} v(\tau)-e^{\pm i \frac \pi 4 H}v^\pm\end{equation}
and hence
we have that if for some $\mu>0$ 
\begin{equation}\label{rate}\|e^{i\tau H} v(\tau)-e^{\pm i \frac \pi 4 H}v^\pm\|_{{\mathcal H}^1(\R^n)}=O(|\pm \frac \pi 4 -\tau|^\mu) \hbox{ as } \tau\rightarrow \pm \frac \pi 4^\mp\end{equation}
then by \eqref{imp...}
\begin{multline*}\|e^{-it\Delta} u(t)-u_0-r_0^\pm\|_{{\mathcal H}^1(\R^n)}=O\big(\big|\pm \frac \pi 4 -\tau(t)\big|^\mu\big)\\=
O\big(\big|\pm \frac \pi 4 -\frac{\arctan 2t}2\big|^\mu\big)= O(|t|^{-\mu})\hbox{ as } t\rightarrow \pm \infty\end{multline*}
which corresponds to \eqref{eq:convergence} for suitable $\mu$ depending on the space dimension $n=2,3,4$.

The rest of the paper is devoted to the analysis of the Cauchy problem
\eqref{enery}. In particular we aim at proving the existence of an unique solution belonging to the space 
$v\in 
{\mathcal C}([-\frac \pi 4, \frac \pi 4]; {\mathcal H}^1(\R^n))$ (notice we require the solution $v$ to exist up to $\tau=\pm \frac \pi 4$) which guarantees 
\eqref{struc}, \eqref{trace}, \eqref{convin} once we go back from $v$ to
$w$ and from $w$ to $u$ via the inverse lens transform.
Moreover, based on the discussion above, in order to deduce
\eqref{eq:convergence} we shall show 
\eqref{rate} with $\mu=1-\delta$ for $n=2$, $\mu=\frac 32$ for $n=3$, 
$\mu=2$ for $n=4$.

In order to deal with \eqref{enery} we shall establish first a uniform energy estimate for the quantity ${\mathcal H}^1(\R^n)$. In turn this bound will allow to conclude easily in the case $n=2,3$ since the corresponding cubic NLS is energy subcritical.
However more work is needed in the case $n=4$ since in this case 
the ${\mathcal H}^1(\R^4)$ bound is not sufficient for the globalization up to $\tau=\pm \frac \pi 4$ and suitable Strichartz norms need to be controlled as well.

It is worth mentioning that in order to deal with \eqref{enery} we shall assume suitable smoothing properties of the free waves 
$e^{-i\tau H} u_0$ which are valid $\mu_\gamma$-a.s. granted by stochastic bounds established in \cite{PRT} and extended in section \ref{stoch} of this paper. 
At the best of our knowledge the idea to globalize solutions 
via a Grönwall argument, by taking advantage of stochastic smoothing  
properties of free waves, has been first introduced in \cite{BT1} for Nonlinear 
Wave Equations. Compared with \cite{BT1} we have in this paper two difficulties, on one hand the energy 
estimate is more involved in the context of NLS, on the other hand in dimension $n=4$ we have 
to deal with an energy critical problem.

We finally recall that Strichartz estimates are available for the propagator 
$e^{-itH}$, namely we have the bound:
\begin{equation}\label{strichnoD}
\|u\|_{L^p((-\frac \pi 4, \frac \pi 4);L^q(\R^n))}
\leq C \|u(0)\|_{L^2(\R^n)} + C 
\|i\partial_t u - H u\|_{L^{\tilde p'}((-\frac \pi 4, \frac \pi 4);L^{\tilde q '}(\R^n))}
\end{equation}
provided that 
\begin{equation}\label{condpq}\frac 2 p + \frac nq=
\frac 2{\tilde p} + \frac n{\tilde q}=\frac n2, \quad p\geq 2, \quad (n,p)\neq (2,2),  (n,\bar p)\ne (2,2). 
\end{equation}
Indeed the proof \eqref{strichnoD} 
follows since $w=e^{i\tau H}\varphi$
is the lens transform of the propagator $u=e^{it\Delta} \varphi$ and by direct computation from
\eqref{eq:lens} we get 
\begin{multline*}\|w(\tau)\|_{L^\infty(\R^n)}
=\frac 1{\cos^\frac n2 (2\tau)}\|u(t(\tau)\|_{L^\infty(\R^n)}
\\\leq \frac C{(\cos(2\tau) |t(\tau)|)^\frac n2}\|\varphi\|_{L^1(\R^n)}=
\frac C{|\sin(2\tau)|^\frac n2}\|\varphi\|_{L^1(\R^n)}
\end{multline*}
where we have used the bound 
$\|e^{it\Delta}\varphi\|_{L^\infty(\R^n)}
\leq \frac{C}{|t|^\frac n2}\|\varphi\|_{L^1(\R^n)}$.
Hence we obtain 
$$\|e^{i\tau H}\varphi\|_{L^\infty(\R^n)}
\leq \frac{C}{|\tau|^\frac n2}\|\varphi\|_{L^1(\R^n)}, \quad \tau\in (-\frac \pi 4, \frac \pi 4)$$
and the proof of \eqref{strichnoD} follows via classical arguments.

Along the paper we shall work with 
Strichartz estimates for derivatives:
\begin{multline}\label{strichD}
\|u\|_{L^p((-\frac \pi 4, \frac \pi 4);{\mathcal W}^{1,q}(\R^n))}
\leq C \|u(0)\|_{{\mathcal H}^1(\R^n)} \\+ C 
\|i\partial_t u + H u\|_{L^{\tilde p'}((-\frac \pi 4, \frac \pi 4);{\mathcal W}^{1,\tilde q '}(\R^n))}
\end{multline}
where $p, q, \tilde p, \tilde q$ are as in \eqref{condpq} and for general $s\in \R$ we have denoted $$\|u\|_{{\mathcal W}^{s,q}(\R^n)}=\|H^{\frac s2} u\|_{L^q(\R^n)},$$
hence in the specific case $q=2$ we get the space ${\mathcal H}^s(\R^n)$.
Notice that \eqref{strichD} follows from
\eqref{strichnoD} since
$e^{itH}$ commutes with $(-H)^{\frac s2}$.

We shall make use along the paper of the fact that for every $ s\geq 0, q\in (1, \infty)$ there exists $C>0$ such that
\begin{multline}\label{equivspaces}
\frac 1C \big(\| \langle x \rangle ^s u\|_{L^q(\R^n)}+\| D^s u\|_{L^q(\R^n)} 
\big) \leq \|u\|_{\mathcal W^{s,q}(\R^n)} \\\leq C  \big(\| \langle x \rangle ^s u\|_{L^q(\R^n)}+\| D^s u\|_{L^q(\R^n)} 
\big)
\end{multline}
where $D^s$ is defined by the Fourier multiplier $\langle \xi \rangle^s$. 

\section{Stochastic bounds}\label{stoch}

We shall need some stochastic bounds associated with the propagator $e^{-itH}$ which are available $\mu_\gamma$ almost surely, where $\mu_\gamma\in {\mathcal M}_n^s$ is one of the measures defined  the introduction.
Recall that by Proposition 2.1 in \cite{PRT}
we have $\mu_\gamma$ almost surely the bound
\begin{equation}\label{eq:khinchin}  \Big\Vert e^{it H} u_0 \Big\Vert_{L^q([-\frac\pi4.\frac\pi4];{\mathcal W}^{s+\frac n2-\frac nq,q} (\R^n))}<\infty.\end{equation} 
More precisely 
\begin{equation}  \mathbb{P} \Big( \Big\{\Big\Vert e^{-it H} u_0 \Big\Vert_{L^q([-\frac\pi4,\frac\pi4];\mathcal{ W}^{s+\frac n2-\frac nq,q} (\R^n))} > K \Big\}\Big) \le  c \exp\Big(-C \frac{K^2}{ T^{\frac2q} \Vert \gamma \Vert^2_{\mathcal{H}^s}}\Big).  \end{equation}  

The main result of the section is the following one.
\begin{proposition}\label{sto}
Let $\eta>0$ and $(n, s)\in \{2,3,4\}\times \R$ satisfy 
\begin{equation}\label{eq:sn2} \left\{ \begin{array}{cl}   s>0 & \text{ if } n=2 \\
s>-\frac12 & \text{ if } n= 3\\
s> -1 & \text{ if } n= 4 
\end{array}\right.
\end{equation}
then for every $\mu_\gamma\in {\mathcal M}^s_n$
there exists a Borel set $\Sigma_n^s\subset {\mathcal H}^s(\R^n)$ such that:
\begin{equation}
\mu_\gamma(\Sigma_n^s)=1;
\end{equation}
we have the bound
\begin{multline} \Big\Vert \langle x \rangle^{s+\frac{n}2-1-\eta} \nabla e^{itH} u_0 \Big\Vert_{L^\infty(\R\times \R^n)}
\\+ \Big\Vert \langle x \rangle^{s+\frac{n}2-\eta}  e^{it H} u_0 \Big\Vert_{L^\infty(\R\times \R^n)}
<\infty, \quad \forall u_0\in \Sigma_n^s.
\end{multline}

\end{proposition}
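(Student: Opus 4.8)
The plan is to obtain Proposition~\ref{sto} from the stochastic bound \eqref{eq:khinchin} by upgrading that estimate in two independent directions, from $L^q$ to $L^\infty$ in time and from $\mathcal{W}^{\sigma,q}$ to a weighted $L^\infty$ in space, at the cost of an arbitrarily small loss of regularity which is then absorbed into $\eta$ by taking $q$ large. First I would reduce everything to the interval $[-\frac\pi4,\frac\pi4]$. Since the eigenvalues of $H$ on $\R^n$ lie in $2\N+n$ and the eigenspaces of $H$ are the spaces of Hermite functions of fixed degree, on which the reflection $\mathcal R f(x)=f(-x)$ acts by $(-1)^{\text{degree}}$, one has $e^{i\pi H}=e^{i\pi n}\,\mathrm{Id}$ and $e^{i\frac\pi2 H}=e^{i\frac\pi2 n}\mathcal R$. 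Hence $|e^{i(\tau+\pi)H}u_0|=|e^{i\tau H}u_0|$ and $|\nabla e^{i(\tau+\pi)H}u_0|=|\nabla e^{i\tau H}u_0|$, i.e. the evolution is $\pi$-periodic up to a phase, while $|e^{i(\tau+\frac\pi2)H}u_0(x)|=|(e^{i\tau H}u_0)(-x)|$ and likewise for the gradient; as $\langle x\rangle$ is even, the weighted $L^\infty_x$ norms are invariant under $\tau\mapsto\tau+\frac\pi2$. Since $[-\frac\pi4,\frac\pi4]\cup(\frac\pi2+[-\frac\pi4,\frac\pi4])$ is an interval of length $\pi$, the $L^\infty(\R\times\R^n)$ quantities in the statement equal the corresponding $L^\infty([-\frac\pi4,\frac\pi4]\times\R^n)$ quantities.

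For the time direction I would use that $\partial_\tau(e^{i\tau H}u_0)=iHe^{i\tau H}u_0$ and that $H$ is an isometry of $\mathcal{W}^{\sigma,q}(\R^n)$ onto $\mathcal{W}^{\sigma-2,q}(\R^n)$ (since $H^{(\sigma-2)/2}H=H^{\sigma/2}$). Setting $s'=s+\frac n2-\frac nq$, \eqref{eq:khinchin} then gives, $\mu_\gamma$-a.s., that $e^{i\tau H}u_0\in L^q_\tau\mathcal{W}^{s',q}_x\cap W^{1,q}_\tau\mathcal{W}^{s'-2,q}_x$ on $[-\frac\pi4,\frac\pi4]$. Interpolating the two Bochner spaces (using $[L^q_\tau(A_0),W^{1,q}_\tau(A_1)]_\theta=W^{\theta,q}_\tau([A_0,A_1]_\theta)$ together with the functional-calculus identity $[\mathcal{W}^{s',q},\mathcal{W}^{s'-2,q}]_\theta=\mathcal{W}^{s'-2\theta,q}$) and the one-dimensional embedding $W^{\theta,q}_\tau\hookrightarrow C^0_\tau$ valid for $\theta q>1$, one obtains $e^{i\tau H}u_0\in L^\infty_\tau\mathcal{W}^{\sigma_0,q}_x$ with $\sigma_0=s'-2\theta$ for every $\theta>\frac1q$; fixing $\theta=\frac1q+\varepsilon$ gives $\sigma_0=s+\frac n2-\frac{n+2}{q}-2\varepsilon$. (It is essential to pass through the equation here: one cannot simply restrict to a dense set of times, because $e^{i\tau H}$ is unbounded on $L^q$ for $q\neq2$.)

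For the space direction I would invoke the norm equivalence \eqref{equivspaces} together with a symbol-calculus Leibniz rule for $\langle x\rangle^a$ against fractional powers of $H$: from $v\in\mathcal{W}^{\sigma_0,q}$ one extracts $\langle x\rangle^a v\in W^{b,q}(\R^n)$ for all $a,b\ge0$ with $a+b\le\sigma_0$, and $\langle x\rangle^a\nabla v\in W^{b,q}(\R^n)$ when $a+b+1\le\sigma_0$; taking $b=\frac nq+\varepsilon$ and using $W^{b,q}\hookrightarrow L^\infty$ yields $\langle x\rangle^a v\in L^\infty_x$ for $a<\sigma_0-\frac nq$ and $\langle x\rangle^a\nabla v\in L^\infty_x$ for $a<\sigma_0-\frac nq-1$. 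Applying this with $v=e^{i\tau H}u_0$ uniformly in $\tau$ and recalling $\sigma_0=s+\frac n2-\frac{n+2}{q}-2\varepsilon$, it only remains to choose $q$ large and $\varepsilon$ small (in terms of $n,\eta$) so that $\frac{2n+2}{q}+2\varepsilon<\eta$; then $a=s+\frac n2-\eta$ and $a=s+\frac n2-1-\eta$ are admissible exponents. I would take $\Sigma_n^s$ to be the (Borel, full-measure) set furnished by \eqref{eq:khinchin} for this fixed $q$; the hypothesis \eqref{eq:sn2} enters precisely to ensure $s'\ge0$, hence $\sigma_0>0$, for $q$ large, which is what makes \eqref{equivspaces} and the weighted interpolation available.

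The main obstacle, and the only genuinely non-formal point, is the passage from $L^q$ to $L^\infty$ in time: because $e^{i\tau H}$ is not bounded on $L^q(\R^n)$ for $q\neq2$, one has no direct control of the supremum in $\tau$, and the argument must route through $\partial_\tau(e^{i\tau H}u_0)=iHe^{i\tau H}u_0$ and an endpoint Bochner--Sobolev embedding. Everything else is bookkeeping of exponents, arranged so that all losses fit inside the prescribed $\eta$.
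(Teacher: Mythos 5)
Your proposal is correct in outline and follows the same overall strategy as the paper: start from the almost sure bound \eqref{eq:khinchin}, upgrade it to $L^\infty$ in time and to a weighted $L^\infty$ in space, and absorb all losses into $\eta$ by taking $q$ large; but both upgrades are implemented differently, and the two places where you quote machinery are exactly where the paper does work. For the time direction the paper never differentiates the flow: it interchanges norms by Minkowski ($L^\infty_t L^q_x\le L^q_x L^\infty_t$), applies the one-dimensional periodic Sobolev embedding in $t$, and converts $\langle D_t\rangle^\varepsilon$ into $\langle H\rangle^\varepsilon$ via the exact identity \eqref{fractcal}, so the loss becomes pure $H$-regularity and is again controlled by \eqref{eq:khinchin}; your route through $\partial_\tau e^{i\tau H}u_0=iHe^{i\tau H}u_0$, the vector-valued interpolation identity $[L^q_\tau(A_0),W^{1,q}_\tau(A_1)]_\theta=W^{\theta,q}_\tau([A_0,A_1]_\theta)$ and $W^{\theta,q}_\tau\hookrightarrow C^0$ is viable, but the interpolation step is genuinely heavier and needs justification (UMD-valued Bessel-potential interpolation together with $[\mathcal{W}^{s',q},\mathcal{W}^{s'-2,q}]_\theta=\mathcal{W}^{s'-2\theta,q}$, i.e.\ bounded imaginary powers of $H$ on $L^q$), whereas the paper's computation is elementary and self-contained; on the other hand your explicit reduction to $[-\frac\pi4,\frac\pi4]$ via $e^{i\pi H}=e^{i\pi n}\,\mathrm{Id}$ and $e^{i\frac\pi2 H}=e^{i\frac\pi2 n}\mathcal{R}$ is correct and in fact cleaner than the paper's implicit appeal to time-periodicity. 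For the space direction, note that the mixed bound $\|\langle x\rangle^a D^b u\|_{L^q}\lesssim\|u\|_{\mathcal{W}^{a+b,q}}$ (and its gradient variant) is \emph{not} contained in \eqref{equivspaces}, which only gives the two endpoint cases $a=0$ and $b=0$: this weighted Leibniz/interpolation fact is precisely the nontrivial content that the paper proves from scratch as Lemma \ref{Linftynew}, via a Littlewood--Paley decomposition played against dyadic spatial cutoffs, so in your write-up it must either be proved or attributed to the Shubin-type pseudodifferential calculus for $H$. Finally, a small bookkeeping slip: hypothesis \eqref{eq:sn2} is not there to ensure $s'\ge0$ or $\sigma_0>0$, but to guarantee $s+\frac n2-1>0$, i.e.\ that $\sigma_0$ exceeds $1$ plus the small losses (so the gradient term fits) and that the weight exponents $s+\frac n2-\eta$ and $s+\frac n2-1-\eta$ are nonnegative for small $\eta$ (the case of larger $\eta$ then follows trivially since the weights decrease in $\eta$). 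With these points repaired or properly referenced, your argument does yield Proposition \ref{sto} with the stated exponents.
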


In order to prove Proposition \ref{sto} we shall need the following Lemma.

\begin{lemme}\label{Linftynew} Let $\rho\geq 0$, $n\in \N$,
 $\varepsilon\in (0, \infty)$ and $q\in [1,\infty)$
 such that $\varepsilon q>n$. Then there exists $C>0$ such that:
 \begin{equation}\label{LPS}
 \|\langle x \rangle^{\rho} \nabla u\|_{L^\infty(\R^n)}
 +\|\langle x \rangle^{\rho+1} u\|_{L^\infty(\R^n)} \leq C \|u\|_{{\mathcal W}^{1+\rho+\varepsilon, q}(\R^n)}.\end{equation}
 
 \end{lemme}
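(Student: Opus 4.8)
The plan is to prove Lemma~\ref{Linftynew} by reducing the $\mathcal{W}^{s,q}$-norms to the equivalent description \eqref{equivspaces} and then applying a weighted Sobolev embedding. Concretely, using \eqref{equivspaces} with $s=1+\rho+\varepsilon$ it suffices to bound the left-hand side of \eqref{LPS} by $C(\|\langle x\rangle^{1+\rho+\varepsilon} u\|_{L^q} + \|D^{1+\rho+\varepsilon} u\|_{L^q})$. The two terms on the left should be handled by the two terms on the right respectively, after a localization/interpolation that distributes the available $\varepsilon>0$ of extra smoothness into an $L^\infty$ gain via the Sobolev embedding $\mathcal{W}^{\varepsilon,q}(\R^n)\hookrightarrow L^\infty(\R^n)$ (valid since $\varepsilon q>n$).

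First I would treat the zeroth-order term $\|\langle x\rangle^{\rho+1} u\|_{L^\infty}$. I would write $\langle x\rangle^{\rho+1} u$ and observe that multiplication by $\langle x\rangle^{\rho+1}$ is, up to lower-order terms, controlled by $H^{(\rho+1)/2}$ acting on $u$; more carefully, I would use that the operator $\langle x\rangle^{\rho+1} H^{-(\rho+1+\varepsilon')/2}$ for suitable small $\varepsilon'$ is bounded on $L^q$ (this is a standard pseudodifferential/Hermite-multiplier fact, or can be extracted directly from \eqref{equivspaces}), leaving $H^{(\rho+1+\varepsilon')/2} u \in L^q$, and then upgrade $L^q$ to $L^\infty$ by spending the remaining smoothness through $\mathcal{W}^{\delta,q}\hookrightarrow L^\infty$ with $\delta q>n$. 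Choosing the various small parameters so that $(\rho+1)+\varepsilon'+\delta \le 1+\rho+\varepsilon$ gives the bound by $\|u\|_{\mathcal{W}^{1+\rho+\varepsilon,q}}$. The gradient term $\|\langle x\rangle^{\rho}\nabla u\|_{L^\infty}$ is handled identically: $\nabla$ costs one derivative, the weight $\langle x\rangle^\rho$ costs $\rho$ units of $H$-regularity, and again $L^q\to L^\infty$ costs $\delta q>n$, for a total of $1+\rho+\delta\le 1+\rho+\varepsilon$.

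A clean way to package both estimates uniformly is to prove the single inequality $\|\langle x\rangle^a D^b u\|_{L^\infty(\R^n)} \le C\|u\|_{\mathcal{W}^{a+b+\varepsilon,q}(\R^n)}$ for all $a,b\ge 0$ and $\varepsilon q>n$, from which \eqref{LPS} follows by taking $(a,b)=(\rho,1)$ and $(a,b)=(\rho+1,0)$. For this master inequality I would first use \eqref{equivspaces} to replace $\|u\|_{\mathcal{W}^{a+b+\varepsilon,q}}$ by $\|\langle x\rangle^{a+b+\varepsilon}u\|_{L^q}+\|D^{a+b+\varepsilon}u\|_{L^q}$, then interpolate: the operator $\langle x\rangle^a D^b \langle D\rangle^{-(b+\varepsilon/2)}\langle x\rangle^{-(a+\varepsilon/2)}$ type composition is bounded $L^q\to L^q$ by Calderón--Vaillancourt / symbol calculus since its symbol, roughly $\langle x\rangle^{-\varepsilon/2}\langle\xi\rangle^{-\varepsilon/2}$, is a symbol of negative order, and what remains is $\langle x\rangle^{\varepsilon/2}\langle D\rangle^{\varepsilon/2}u$ with $(\varepsilon/2)q$ not yet above $n$; so I would instead split the budget asymmetrically, keeping $\varepsilon$ (not $\varepsilon/2$) of pure smoothness for the final Sobolev embedding. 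The main obstacle I anticipate is precisely this bookkeeping — justifying the boundedness on $L^q$ (rather than just $L^2$) of the weighted Hermite-type multipliers that trade powers of $\langle x\rangle$ against powers of $H$, since $L^q$-boundedness of such operators is more delicate than the $L^2$ case; I would handle it either by citing the standard $L^q$ theory of the harmonic oscillator functional calculus or by a direct argument via the known $L^q\to L^q$ mapping properties encoded in \eqref{equivspaces} combined with complex interpolation in the order parameter.
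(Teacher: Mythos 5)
Your overall reduction (trade the weight against $H$-regularity in $L^q$, then spend an exponent $\delta$ with $\delta q>n$ on the embedding $\mathcal W^{\delta,q}\hookrightarrow L^\infty$) is a reasonable plan, and the bookkeeping you worry about is actually fine: since $\varepsilon q>n$ is strict you may pick $\delta\in(n/q,\varepsilon)$ for the embedding and keep $\varepsilon-\delta>0$ for the weight trade. The genuine gap is the central operator bound itself: for $a,b>0$ simultaneously you need an estimate of the form $\|\langle x\rangle^{a}D^{b}u\|_{L^q}\le C\|u\|_{\mathcal W^{a+b+\varepsilon',q}}$, i.e.\ $L^q$-boundedness of $\langle x\rangle^{a}D^{b}H^{-(a+b+\varepsilon')/2}$, and this is asserted rather than proved. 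It cannot be ``extracted directly from \eqref{equivspaces}'': that equivalence covers only the pure cases $a=0$ or $b=0$, and passing from those endpoints to the mixed bound by ``complex interpolation in the order parameter'' is not routine, because the analytic family involves conjugating imaginary powers of $\langle D\rangle$ by the weight $\langle x\rangle^{a+b}$, operators which do not commute; controlling them requires exactly the weighted multiplier/pseudodifferential input you are trying to avoid. Calder\'on--Vaillancourt does not help either, being an $L^2$ theorem; for $L^q$ one needs the $L^p$ theory of Hermite--Riesz transforms ($x_jH^{-1/2}$, $\partial_jH^{-1/2}$) or a global (SG-type) calculus, a citable but nontrivial external ingredient your write-up neither proves nor pins down. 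There is also a small wrinkle in deducing \eqref{LPS} from your master inequality with $(a,b)=(\rho,1)$: since $\partial_j=(\partial_j\langle D\rangle^{-1})D$ and the zeroth-order multiplier $\partial_j\langle D\rangle^{-1}$ is not bounded on $L^\infty$, you should formulate the master estimate with $\nabla$ rather than with $D$.

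For contrast, the paper's proof avoids any weighted functional calculus: it localizes to dyadic spatial annuli $|x|\sim M$ via $\varphi(x/M)$, applies a Littlewood--Paley decomposition, and uses $W^{\delta,q}\hookrightarrow L^\infty$; the low frequencies $N<M$ are paid for by the weighted norm $\|\langle x\rangle^{1+\rho+\delta}u\|_{L^q}$, while for $N\ge M$ the required weight gain $M^{-(1+\rho)}$ is converted into the derivative gain $N^{-(1+\rho+\varepsilon-\delta)}$, summable because $\delta<\varepsilon$; the equivalence \eqref{equivspaces} is then used only in its pure-weight/pure-derivative form (the gradient term is treated the same way after the Leibniz splitting \eqref{LEIB}). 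To salvage your route, either carry out this elementary double dyadic argument, or explicitly prove or cite the $L^q$ bound for $\langle x\rangle^{a}D^{b}H^{-(a+b+\varepsilon')/2}$ (for instance via iterated Hermite--Riesz transforms for integer orders together with interpolation).
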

 
 \begin{proof}
  We denote by $\Delta_N$ the classical Littlewood-Paley partition where 
 $N$ runs in the set of dyadic numbers such that $N\geq 1$ and 
 we also fix a cut-off function $\varphi\in C^\infty(\R^n; [0,1])$ such that
 $\varphi(x)=1$ on $\frac 12<|x|<2$
 and $\varphi(x)=0$ on $\R^n \setminus \{\frac 14<|x|<4\}.$ 
 We first estimate $\|\langle x \rangle^{\rho+1} u\|_{L^\infty(\R^n)} $.
By the Minkowski inequality and by the Sobolev embedding
$W^{\delta, q}(\R^n)\subset L^\infty(\R^n)$ for some $\delta\in (0, \varepsilon)$ such that $\delta q>n$ (below $W^{s, r}$ denote the usual Sobolev spaces),  we get
\begin{multline*}\big \|\varphi\big(\frac xM\big) u\big \|_{L^\infty(\R^n)}\leq C \sum_N \big \|\Delta_N(\varphi\big(\frac xM\big)u)\big \|_{W^{\delta, q}(\R^n)}\\
\leq C \sum_{N<M} N^{\delta} \big \|\varphi\big(\frac xM\big)u\big \|_{L^{q}(\R^n)}
\\+ C\sum_{N\geq M} N^{-1-\varepsilon-\rho+\delta}\big \|\Delta_N(\varphi\big(\frac xM\big)u)\big \|_{W^{1+\rho+\varepsilon, q}(\R^n)}\\\leq 
C M^\delta\big \|\varphi\big(\frac xM\big)u\big \|_{L^{q}(\R^n)}
+ CM^{-\rho-1}\sum_{N\geq M} N^{-\varepsilon+\delta}\big \|\varphi\big(\frac xM\big)u\big \|_{W^{1+\rho+\varepsilon, q}(\R^n)}.
\end{multline*}
Summarizing we get
\begin{multline}\label{albplu}
M^{1+\rho} \big \|\varphi\big(\frac xM\big) u(x)\big \|_{L^\infty(\R^n)}\\
\leq 
C M^{1+\rho+\delta} \big \|\varphi\big(\frac xM\big)u \big\|_{L^q(\R^n)}
+ C\big \|\varphi \big(\frac xM\big)u\big \|_{W^{1+\rho+\varepsilon, q}(\R^n)}
\\\leq  C \big \|\langle x \rangle^{1+\rho+\delta} u \big\|_{L^q(\R^n)}
+ C \|u\|_{W^{1+\rho+\varepsilon, q}(\R^n)}
\end{multline}
where we used that 
$\sup_M \big \|\varphi \big(\frac xM\big)u\big \|_{W^{1+\rho+\varepsilon, q}(\R^n)}\leq 
C \|u\|_{W^{1+\rho+\varepsilon, q}(\R^n)}$.
It is now easy to deduce, by using the localization property of the function $\varphi$, the bound:
$$
\|\langle x \rangle^{\rho+1} u\|_{L^\infty(\R^n)} \leq C \|u\|_{{\mathcal W}^{1+\rho+\varepsilon, q}(\R^n)}.$$ 
Next we estimate $\|\langle x \rangle^{\rho} \nabla u\|_{L^\infty(\R^n)}$.
First we notice that 
\begin{equation}\label{LEIB}\varphi\big (\frac xM\big )\nabla u=\nabla \big[\varphi\big (\frac xM\big ) u\big] - \frac 1M \nabla \varphi \big (\frac xM\big ) u.\end{equation}
Mimicking the proof of the first inequality in \eqref{albplu} (with $\varphi$ replaced by $\nabla \varphi$)
we get
\begin{multline*}
M^{1+\rho} \big \|\nabla \varphi\big(\frac xM\big) u(x)\big \|_{L^\infty(\R^n)}
\\\leq  C M^{1+\rho+\delta} \big \|\nabla \varphi\big(\frac xM\big)u \big\|_{L^q(\R^n)}
+ C\big \|\nabla \varphi \big(\frac xM\big)u\big \|_{W^{1+\rho+\varepsilon, q}(\R^n)}
\end{multline*}
and hence, by using the localization property of $\varphi$, we get the following bound for the second term in the r.h.s. of \eqref{LEIB}:

\begin{equation}\label{secondrhs}
M^{2+\rho} \big \|\frac 1M \nabla \varphi\big(\frac xM\big) u(x)\big \|_{L^\infty(\R^n)}
\leq C \big \|\langle x \rangle^{1+\rho+\delta} u \big\|_{L^q(\R^n)}
+ C \|u\|_{W^{1+\rho+\varepsilon, q}(\R^n)}.
\end{equation}
We estimate below by the Minkowski inequality and the Sobolev embedding the first term on the r.h.s. of \eqref{LEIB}:
\[ 
\begin{split} \big \|\nabla \big[\varphi\big(\frac xM\big) u\big]\big \|_{L^\infty(\R^n)}\, & \leq C \sum_N \big \|\nabla \Delta_N(\varphi\big(\frac xM\big)u)\big \|_{W^{\delta, q}(\R^n)}
\\ & \leq C \sum_{N<M} N^{1+\delta} \big \|\varphi\big(\frac xM\big)u\big \|_{L^{q}(\R^n)}
\\ & \qquad + C\sum_{N\geq M} N^{-\varepsilon-\rho+\delta}\big \|\Delta_N(\varphi\big(\frac xM\big)u)\big \|_{W^{1+\rho+\varepsilon, q}(\R^n)}\\  &\hspace{-2cm}  \leq 
C M^{1+\delta}\big \|\varphi\big(\frac xM\big)u\big \|_{L^{q}(\R^n)}
+ CM^{-\rho}\sum_{N\geq M} N^{-\varepsilon+\delta}\big \|\varphi\big(\frac xM\big)u\big \|_{W^{1+\rho+\varepsilon, q}(\R^n)},
\end{split}
\] 
where $0<\delta<\varepsilon$ satisfies $\delta q>n$.
In turn this estimate implies
\[ \begin{split} \hspace{2cm} & \hspace{-2cm} M^\rho \big \|\nabla \big[\varphi\big(\frac xM\big) u\big]\big \|_{L^\infty(\R^n)}
\\ & \leq C M^{1+\rho+\delta}\big \|\varphi\big(\frac xM\big)u\big \|_{L^{q}(\R^n)}
+ C\sum_{N\geq M} N^{-\varepsilon+\delta}\big \|\varphi\big(\frac xM\big)u\big \|_{W^{1+\rho+\varepsilon, q}(\R^n)}
\\ & \leq  C \big \|\langle x \rangle^{1+\rho+\delta} u \big\|_{L^q(\R^n)}
+ C \|u\|_{W^{1+\rho+\varepsilon, q}(\R^n)}.
\end{split}\] 
By combining this estimate with \eqref{LEIB} and \eqref{secondrhs}, and by recalling the localization property of $\varphi$, 
we get
$$\|\langle x \rangle^{\rho} \nabla u\|_{L^\infty(\R^n)} \leq C \|u\|_{{\mathcal W}^{1+\rho+\varepsilon, q}(\R^n)}.$$

 \end{proof}
 
 \begin{proof}[Proof of Prop. \ref{sto}]
 We fix $\varepsilon$, $q$ and $\rho$ as in Lemma \ref{Linftynew}.
 Then we get 
 \begin{multline*} 
 \big \| \langle x \rangle^{\rho} \nabla e^{-itH} u_0 \big\|_{L^\infty(\R\times \R^n)}
+ \big \| \langle x \rangle^{\rho+1}  e^{-it H} u_0 \big \|_{L^\infty(\R\times \R^n)}
\\\leq C \big \|  e^{-it H} u_0 \big\|_{L^\infty(\R; {\mathcal W}^{1+\varepsilon+\rho, q} (\R^n))}
\leq C \big \|  \langle H \rangle^{\frac{1+\varepsilon+\rho}2} e^{-it H} u_0 \big\|_{L^q(\R^n;L^{\infty} (\R))}
\end{multline*}
where we used \eqref{LPS} and the Minkowski inequality at the last step to interchange space and time in the mixed space-time norm.
Next we use the Sobolev embedding for functions periodic in time
$${\mathcal W}^{\varepsilon, q}_{per}(\R)\subset L^\infty(\R)$$
and we can continue the estimate above as follows
$$\dots \leq C \Big\Vert  \langle D_t\rangle^\varepsilon \langle H \rangle^{\frac{1+\varepsilon+\rho}2} e^{-it H} u_0 \Big\Vert_{L^q(\R;L^q (\R^n))}
=C\Big\Vert \langle H \rangle^{\frac{1+3\varepsilon+\rho}2} e^{-itH} u_0 \Big\Vert_{L^q(\R;L^q (\R^n))}$$
where we used the property
\begin{equation}\label{fractcal}\langle D_t\rangle^\varepsilon \xi=\langle H \rangle^\varepsilon \xi.
\end{equation}
For completeness we prove this fact below, first we conclude the proof.

 We conclude by \eqref{eq:khinchin}  provided that 
$1+\rho+3\varepsilon<s+\frac n2-\frac nq$.
Recall that we used in the computations above
$\varepsilon q>n$, hence we can replace $\varepsilon$ by
$\varepsilon_K=\frac \varepsilon K$ and $q$ by $q_K=qK$ and the estimate above is still satisfied. Hence if we choose
$K$ large enough we have that the condition 
$1+\rho+3\varepsilon_K<s+\frac n2-\frac n{q_K}$
is satisfied for $K$ large enough
provided that $s, n$ satisfy $\rho<s+\frac n2 -1$.
On the other hands in the Lemma \ref{Linftynew} 
we have the condition $\rho\geq 0$ and hence we have to impose
$s+\frac n2 -1>0$, which is equivalent to the conditions
imposed on $s,n$ along Proposition \ref{sto} and the proof is complete.

Next we prove \eqref{fractcal}.
Recall that for a generic $\psi(x)=\sum_k \psi_k \varphi_k(x)$, where $\varphi_k(x)$ is basis of eigenfunctions for $H$ with eigenvalues $\lambda_k$,
we have
$$\langle H \rangle^\varepsilon \psi= \sum_k \psi_k  \langle \lambda_k \rangle^\varepsilon \varphi_k(x)$$
and similarly for a function $\phi(t)=\sum_k \phi_k e^{-ikt}$
we have
$$\langle D_t \rangle^\varepsilon \phi= \sum_k \phi_k \langle k \rangle^\varepsilon e^{ikt}.$$
On the other hands for $u_0=\sum_k c_k \varphi_k(x)$ we have
$$e^{-itH} u_0= \sum_k c_k \varphi_k(x) e^{-it\lambda_k}$$
and hence
\begin{equation*}
\langle H \rangle^\varepsilon e^{-itH} u_0= 
\sum_k c_k e^{-it\lambda_k} \langle \lambda_k \rangle^\varepsilon \varphi_k(x), \quad
\langle D_t \rangle^\varepsilon e^{-itH} u_0 = \sum_k c_k \varphi_k(x) \langle \lambda_k \rangle^\varepsilon e^{-it\lambda_k}\end{equation*}
which imply \eqref{fractcal}.
\end{proof}

    \section{Energy estimates}   
  \label{sec:energy}
 In this section we establish a key energy estimate for solutions to
\[
\begin{cases}i\partial_t v- H v = \cos^{n-2}(2t) |e^{-it H}u_0+ v|^{2}( e^{-it H}u_0 +v),
\\v(0)=0
\end{cases}
\]
on the strip $(t,x)\in (-\frac \pi 4,\frac \pi 4)\times \R^n$, $n=2,3,4$
which is, as explained along the introduction, the main problem (at least for $n=2,3$ where the problem is energy subcritical) to be studied in order to get Theorem \ref{4d}. We denote
\begin{equation} \label{eq:xi} \xi(t,x)= (e^{-itH} u_0)(x). \end{equation}

If $n,s$ satisfy \eqref{eq:sn} then by Proposition \ref{sto} and \eqref{eq:khinchin} the following stochastic bounds holds for every $u_0 \in \Sigma_n^s$ with $K$ depending on $u_0$
(recall that $\Sigma_n^s$ is defined along Proposition \ref{sto}) : 
\begin{equation}\label{eq:stoch2} 
 K:=  \Vert \langle x \rangle   \xi \Vert_{L^\infty( \R\times \R^2)} +  \Vert \nabla \xi  \Vert_{L^\infty(\R\times \R^2)}+ \Vert \xi \Vert_{L^2([-\frac\pi4,\frac\pi4]\times \R^2)}  < \infty  \text{ if } n=2; \end{equation}
 and, 
 \begin{equation}\label{eq:stoch3} 
 \begin{split} 
K:= \, &  \Vert \langle x \rangle^{\frac65}   \xi \Vert_{L^\infty( \R\times \R^3)} + \Vert \langle x \rangle^{\frac15} \nabla \xi  \Vert_{L^\infty(\R\times \R^3)}  + \Vert \xi \Vert_{L^{\frac{12}{5}}([-\frac\pi4, \frac\pi4]\times \R^3)} 
\\ & +\Vert \langle x \rangle \xi \Vert_{L^{12}([-\frac{\pi}4,\frac\pi4]\times \R^3)}
+ \Vert \nabla  \xi \Vert_{L^{12}([-\frac{\pi}4,\frac\pi4]\times \R^3)} < \infty \qquad \text{ if } n=3; 
\end{split}
\end{equation}
and 
\begin{equation}\label{eq:stoch4} 
\begin{split} 
K:= \, & \Vert \langle x \rangle^{\frac75}   \xi \Vert_{L^\infty( \R\times \R^4)} + \Vert \langle x \rangle^{\frac25} \nabla \xi  \Vert_{L^\infty(\R\times \R^4)}
 +  \Vert \xi \Vert_{L^{\frac{8}{3}}([-\frac\pi4, \frac\pi4]\times \R^4)} 
 \\ & +  \Vert \langle x \rangle  \xi \Vert_{L^{8}([-\frac{\pi}4,\frac\pi4]\times \R^4)}   + \Vert \nabla \xi \Vert_{L^{8}([-\frac{\pi}4,\frac\pi4]\times \R^4)}   < \infty \qquad \text{ if } n=4. 
\end{split} 
\end{equation} 

Notice that the condition \eqref{eq:sn2} on $(n,s)$ in Proposition \ref{sto} is weaker than the condition \eqref{eq:sn} that we are assuming.   We write the equation as 
\begin{equation}\label{ener} 
\begin{cases}i\partial_t v- H v = \cos^{n-2}(2t) |\xi+ v|^{2}( \xi +v),
\\v(0)=0.
\end{cases}
\end{equation} 
We will provide energy estimates for weak solutions $v\in L^\infty([0,T], \mathcal{H}^1)$ to \eqref{ener}, with the obvious modification to $t \le 0$. By weak solution we mean a distributional solution. 
Observe that a straightforward calculation gives 
\[  \Vert Hw - |w+\xi|^2 (w+\xi) \Vert_{\mathcal{H}^{-1}} \le   c \Big( \Vert w \Vert_{\mathcal{H}^1}+
  \Vert w \Vert_{\mathcal{H}^1}^3+ K^3 \Big)\] 
and hence any weak solution defines a Lipschitz continuous map 
\[ (0,T) \to \mathcal{H}^{-1}(\R^n). \] 
In particular the limits of $v$ at the endpoints exist and by a slight abuse of notation we assume 
\[  v \in C_w ( [0,T] ; \mathcal{H}^1), \]
i.e. $v$ is weakly continuous on the closed interval. 
  
\begin{proposition}\label{energyestimate}
 Assume that $\xi$ satisfies
\eqref{eq:stoch2}, \eqref{eq:stoch3}, \eqref{eq:stoch4} respectively in dimension $n=2,3,4$ and 
let $v \in L^\infty([0, T); \mathcal{H}^1(\R^n))$ be weak solution to \eqref{ener} 
on the strip $[0, T)\times \R^n$, with $T\in [0, \frac \pi 4]$.
 Then 
\begin{equation}\label{enEST}\|v\|_{L^\infty([0,T); {\mathcal H}^1(\R^n))}\le K^4 e^{2\pi (K^2+1)}.\end{equation}
\end{proposition}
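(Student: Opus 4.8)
The plan is to run an energy estimate directly on equation \eqref{ener}, testing against $\partial_t \bar v$ and against $\bar v$ in order to control the time derivative of the natural energy, and then to close with Gr\"onwall. First I would introduce the energy
\[ E(t) = \frac12 \int_{\R^n} \big( |\nabla v|^2 + |x|^2 |v|^2 \big)\, dx = \frac12 \| v \|_{\mathcal H^1}^2, \]
and compute $\frac{d}{dt} E(t)$ along the solution. Since $v$ is only a weak solution in $C_w([0,T];\mathcal H^1)$ with $\partial_t v \in \mathcal H^{-1}$ Lipschitz in time (as noted just before the statement), the manipulations should really be performed on the regularized/Galerkin approximations or justified by the duality pairing $\langle i\partial_t v - Hv, \partial_t v\rangle$; I would remark on this once and then compute formally. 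Pairing the equation with $\partial_t \bar v$ and taking the real part kills the $\langle Hv,\partial_t v\rangle$ and the $i\langle \partial_t v,\partial_t v\rangle$ contributions in the usual way, leaving
\[ \frac{d}{dt}\, \frac12 \| v\|_{\mathcal H^1}^2 = - \Re \int_{\R^n} \cos^{n-2}(2t)\, |\xi+v|^2 (\xi+v)\, \partial_t \bar v \, dx. \]

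The key step is to bound the right-hand side by $C(K)(1 + \|v\|_{\mathcal H^1}^2)$ up to a total time derivative. The idea: write $|\xi+v|^2(\xi+v)\,\partial_t\bar v$, expand the cube in powers of $v$ and $\xi$, and treat the two extreme terms separately. The term $|v|^2 v\, \partial_t \bar v$ combines with its conjugate to give $\frac{d}{dt}\big(\tfrac14 \|v\|_{L^4}^4\big)$ times $\cos^{n-2}(2t)$; moving the cosine factor across $\frac{d}{dt}$ produces an error $\sim |\sin(2t)\cos^{n-3}(2t)| \|v\|_{L^4}^4$, which is harmless in dimensions $n=2,3$ where $\|v\|_{L^4}^4 \lesssim \|v\|_{\mathcal H^1}^4$ is \emph{not} linear — so here I need to be more careful and actually keep $\|v\|_{L^4}^4$ controlled, e.g.\ by Gagliardo--Nirenberg $\|v\|_{L^4}^4 \lesssim \|v\|_{L^2}^{4-n}\|v\|_{\mathcal H^1}^{n}$ and, since $n\le 4$, absorbing or iterating. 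The purely $\xi$ term $|\xi|^2\xi\, \partial_t\bar v$ is integrated by parts in $t$ to throw the $\partial_t$ onto $\xi$ and onto $\cos^{n-2}(2t)$: this is where the pointwise weighted bounds $\|\langle x\rangle \xi\|_{L^\infty}$, $\|\nabla\xi\|_{L^\infty}$ (and the $L^2_{t,x}$ or $L^q_{t,x}$ bounds) from \eqref{eq:stoch2}--\eqref{eq:stoch4} are consumed, producing contributions $\le C(K)(1+\|v\|_{\mathcal H^1})$. The mixed terms ($v^2\xi$, $v\xi^2$ paired with $\partial_t\bar v$) are handled analogously, extracting total time derivatives of quantities like $\int \xi v |v|^2$ or $\int \xi^2 \bar v^2$ and paying the price in $K$-weighted norms of $\xi$ and powers of $\|v\|_{\mathcal H^1}$ that stay at most quadratic after the integration by parts.

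After integrating in time from $0$ to $t$ and using $v(0)=0$, one arrives at
\[ \| v(t)\|_{\mathcal H^1}^2 \le C(K) + C(K)\int_0^t \big(1 + \|v(\sigma)\|_{\mathcal H^1}^2\big)\, d\sigma + (\text{boundary terms bounded by } \tfrac12 \|v(t)\|_{\mathcal H^1}^2 + C(K)), \]
where the last boundary term (coming from the total-derivative pieces evaluated at time $t$, which are at most cubic in $v$ but with one small factor) must be absorbed into the left-hand side using Young's inequality; this absorption, together with the need to keep every constant explicitly polynomial in $K$ so as to land on $K^4 e^{2\pi(K^2+1)}$, is the main obstacle. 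Granting the absorption, Gr\"onwall's inequality on $[0,T)$ with $T\le \pi/4$ gives $\|v(t)\|_{\mathcal H^1}^2 \le C_1(K)\, e^{C_2(K)\, t}$; tracking constants (the $2\pi$ exponent suggests $C_2(K)\sim 2(K^2+1)$ and the prefactor $\sim K^8$, i.e.\ $\|v\|_{\mathcal H^1}\le K^4 e^{2\pi(K^2+1)}$) yields \eqref{enEST}. The same argument with $t\mapsto -t$ covers the interval to the left of $0$, and a standard approximation (Galerkin truncation in the eigenbasis of $H$, for which all the above pairings are legitimate, then passing to the limit using weak-$\ast$ compactness in $L^\infty_t \mathcal H^1$) upgrades the formal computation to arbitrary weak solutions $v\in L^\infty([0,T);\mathcal H^1)$.
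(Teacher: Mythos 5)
There is a genuine gap, and it sits exactly where you locate ``the main obstacle.'' Your energy is $E(t)=\frac12\|v\|_{\mathcal H^1}^2$ alone, so after pairing with $\partial_t\bar v$ you must expand $|\xi+v|^2(\xi+v)$ and deal with the mixed terms such as
\begin{equation*}
\Re\int_{\R^n}\cos^{n-2}(2t)\,\xi\,|v|^2\,\partial_t\bar v\,dx,
\qquad
\Re\int_{\R^n}\cos^{n-2}(2t)\,\bar\xi\,v^2\,\partial_t\bar v\,dx .
\end{equation*}
These are \emph{not} total time derivatives, and integrating by parts in $t$ does not dispose of them: after extracting $\partial_t\big(\int\xi\, \bar v^2 v\big)$-type quantities you are left with $\partial_t v$ multiplied by expressions quadratic in $v$. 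To estimate those you must either use $\partial_t v\in\mathcal H^{-1}$ with $\|\partial_t v\|_{\mathcal H^{-1}}\lesssim \|v\|_{\mathcal H^1}+\|v\|_{\mathcal H^1}^3+K^3$, or substitute the equation, which reintroduces $Hv$ (two derivatives, not controlled for an $\mathcal H^1$ solution, and a spatial integration by parts then produces terms like $\int|\xi|\,|v|\,|\nabla v|^2$ needing $\|v\|_{L^\infty}$) or the full cubic nonlinearity (quartic and quintic powers of $\|v\|_{\mathcal H^1}$). Either way the differential inequality is superlinear in the energy, Gr\"onwall does not close linearly, and the bound $K^4e^{2\pi(K^2+1)}$ does not follow. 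Your claim that the powers of $\|v\|_{\mathcal H^1}$ ``stay at most quadratic after the integration by parts'' is exactly the step that is not substantiated and, as written, fails. A second, smaller issue: the term produced by differentiating the cosine factor, which you bound by $|\sin(2t)\cos^{n-3}(2t)|\,\|v\|_{L^4}^4$ and propose to absorb via Gagliardo--Nirenberg, is again quartic in the energy and cannot be absorbed; the correct observation is that for $n=2$ it is absent ($n-2=0$) and for $n=3,4$ it comes with the sign $-\frac{n-2}{2}\sin(2t)\cos^{n-3}(2t)\le 0$ on $[0,\frac\pi4]$, so it can simply be dropped.

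The paper avoids all of this by choosing the energy adapted to the \emph{full} field,
\begin{equation*}
\mathcal E(v)=\frac12\|v\|_{\mathcal H^1}^2+\frac14\int_{\R^n}\cos^{n-2}(2t)\,|\xi+v|^4\,dx .
\end{equation*}
With this choice the pairing of $\partial_t v$ with the cubic nonlinearity cancels identically against the equation (no expansion into mixed terms is ever needed), the cosine-derivative term $-\frac{n-2}{2}\sin(2t)\cos^{n-3}(2t)\int|\xi+v|^4$ is nonpositive and discarded, and the only surviving contribution is
$\Re\int\cos^{n-2}(2t)\,\partial_t\xi\,|\xi+v|^2(\overline{\xi+v})
=\Im\int\cos^{n-2}(2t)\,H\xi\,|\xi+v|^2(\overline{\xi+v})$,
which after one spatial integration by parts is controlled \emph{linearly} by $\mathcal E(v)$ using the weighted $L^\infty$ bounds on $\xi$ and $\nabla\xi$ from \eqref{eq:stoch2}--\eqref{eq:stoch4}, plus source terms $\int(|\nabla\xi|^2+|x|^2|\xi|^2)|\xi|^2$ handled by the space-time $L^p$ bounds. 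If you want to salvage your scheme, you should incorporate the quartic term with $\xi+v$ (not just $v$) into the energy; otherwise the mixed terms defeat the linear Gr\"onwall argument.
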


\begin{remark}
We will provide a formal proof, assuming as much regularity as we need, so that the idea becomes more clear. Working with weak solutions requires more care, but the arguments for this extension are standard.  
\end{remark} 

\begin{remark}\label{rem:energy}  The proof below applies to all dimensions with minor changes: If $ n \ge 3$, $ s \ge -\frac{n-2}4$, $ \gamma \in \mathcal{A}^s_n $ then for all $\varepsilon>0$ $\mu_\gamma$ a.s.
\[ \begin{split} \Vert \langle x \rangle^{\frac{n+2}4-\varepsilon}  \xi \Vert_{L^\infty(\R \times \R^n)} + \Vert \langle x \rangle^{\frac{n-2}4-\varepsilon}  \nabla \xi \Vert_{L^\infty(\R\times \R^n)} &
\\ & \hspace{-4cm}  + \Vert \xi \Vert_{L^{\frac{4n}{n+2}}( [-\frac\pi4,\frac\pi4] \times \R^n)} 
+ \Vert \nabla \xi \Vert_{L^{\frac{4n}{n-2}}([-\frac\pi4,\frac\pi4]\times \R^n)}  < \infty. \end{split} 
\] 
The same proof as for Proposition \ref{energyestimate}
also applies to (see Remark \ref{rem:sub}) 
\[ i \partial_t u - Hu = \cos^{\frac{(p-2)n}2-2} (2t) |u|^{p-2} u \] 
for $ 2+\frac4n \le  p \le 4$ for $ s \ge -\frac{n}2 +\frac{n+2}p$.  
For notational simplicity and clarity we restrict to $p=4$ and $2\le n \le 4$.
\end{remark}

\begin{proof}
We introduce the energy
\begin{equation*}
{\mathcal E}(v)=  \frac 12 \|v\|_{{\mathcal H}^1(\R^n)}^2 + \frac 14 \int_{\R^n} \cos(2t)^{n-2}|\xi+v|^{4} dx
\end{equation*}
and compute
\begin{multline*}\frac d{dt} {\mathcal E}(v(t,x))= \Re \int_{\R^n}  \partial_t v H\bar v dx +  \Re \int_{\R^n} \cos(2t)^{n-2}  \partial_t (\xi+v) (\overline{\xi+v}) |\xi+v|^{2}dx\\
- \frac{(n-2)}2\int_{\R^n} \sin (2t)\cos(2t)^{n-3 }|\xi+v|^{4} dx.\end{multline*}
where 
\[ \Re \int_{\R^n}  \partial_t v H\bar v dx =  -\Re \int_{\R^n} \partial_t v \big ( i \partial_t \bar v  + \cos^{n-2}(2t) |\xi +v |^2 (\overline{ \xi+v })\big )dx.
\]
The first term on the right hand side vanishes. 
Since $\sin (2t), \cos (2t)\geq 0$ for every $t\in [0,\frac \pi 4]$, by using the equation solved by $v$ and $\xi$ we get:
\[ 
\begin{split} 
\frac d{dt} {\mathcal E}(v(t,x))
\, & \leq \Re \int_{\R^n} \cos(2t)^{n-2} \big[- \partial_t v+ \partial_t ( v+\xi) \big] |\xi+v|^2 (\overline{\xi+v})  \, dx 
\\& =  \Re \int_{\R^n} \cos(2t)^{n-2} \partial_t \xi |\xi+v|^2 (\overline{\xi+v}) dx
\\& =\Im \int_{\R^n} \cos(2t)^{n-2} H\xi  |\xi+v|^2 (\overline{\xi+v}) dx
\end{split}
\] 
and by recalling that $H=-\Delta +|x|^2$ we can continue as follows
\[ 
\begin{split} 
\frac d{dt} {\mathcal E}(v(t))\, & \leq -\Im \int_{\R^n} \cos(2t)^{n-2} \Delta \xi  |\xi+v|^2 (\overline{\xi+v})dx
\\& \qquad +\Im \int_{\R^n} \cos(2t)^{n-2} |x|^2 \xi  |\xi+v|^2 (\overline{\xi+v})dx
\\ & =I+II.
\end{split}
\] 
Next we estimate $I$ by integration by parts and using the stochastic estimate \eqref{eq:stoch2}, \eqref{eq:stoch3}, \eqref{eq:stoch4}  as follows:
\[ 
|I| \leq 3\cos(2t)^{n-2}
\Big[ \int_{\R^n}  |\nabla \xi|^2  |\xi+v|^2 dx
+ \int_{\R^n}  |\nabla \xi| |\nabla v|  |\xi+v|^2 dx \Big] =: 3(I_1+ I_2),  
\]  
by Young's inequality, using  $\Vert \nabla \xi \Vert_{L^\infty} \le K $
\[ I_2 \le \frac{\cos^{n-2}(2t)}2 K\Big(  \Vert  \nabla v \Vert^2_{L^2}
+ \Vert v+\xi \Vert_{L^4}^4 \Big) \le 2 K \mathcal{E}(v(t)) \] 
and 
\[  I_1 \le   2 \int_{\R^n} |\nabla \xi|^2 |\xi|^2 dx   + 2\int_{\R^n} |\nabla \xi|^2 |v|^2 dx     \le  2\int_{\R^n} |\nabla \xi|^2 |\xi|^2 dx  + 2K^2  \Vert v \Vert^2_{L^2}.
\] 
Hence we get, using again the stochastic estimates (and that the $L^2$ part of the energy has not been used above) 
$$|I|\leq   (6K+ 6K^2)  {\mathcal E}(v(t)) +  6\int |\nabla \xi|^2 |\xi|^2 dx. $$
By a similar argument we get
$$|II|\leq 2K  {\mathcal E}(v(t)) +  \frac1K \int  |x |^2 |\xi|^4 dx .$$ 
We  apply  Gr\"onwall's Lemma and obtain for $0\le T \le \frac\pi4$ (using the $L^p$ bounds in \eqref{eq:stoch2}-\eqref{eq:stoch4}) 
\[ \sup_{0\le t \le T} \mathcal{E}(v(t)) \le e^{2\pi(K^2+1)} \int_0^T \int (|\nabla \xi|^2 +|x|^2 |\xi|^2)   |\xi|^2 dx\, dt \le K^4 e^{\pi(K^2+2)}.\] 
 \end{proof}

\begin{remark}\label{rem:sub}   Let $ 2< p < 4$. Then $u$ satisfies 
\[ i\partial_t u+ \Delta u = |u|^{p-2} u \] 
if and only if its lens transform $w$ satisfies 
\[ i \partial_\tau w - H w - \cos^{\frac{(p-2)n}2-2}  (2\tau) |w|^{p-2}  w = 0. \] 
 $p= 2+\frac4n$ is the mass critical exponent, for which  the exponent of $\cos(2\tau) $ vanishes. $p = 2+ \frac{4}{n-2}$ (for $n\ge 3$) is the energy critical case in which the eponent of $ \cos(2\tau) $ in the equation is $2$. Formally the exponent of $\cos$ becomes $-1$ if  $p= 2+ \frac2n$. 

The main difference in the case $ 2+\frac4n \le   p < 4$ is in the estimate 
\[ I:= \int_{\R^n} |\nabla \xi|\,  |\nabla v |\,  |v+\xi|^{p-2} dx \le \Vert \nabla \xi \Vert_{L^{\frac{2p}{4-p}}} \Vert  \nabla v \Vert_{L^2} 
\Vert v+\xi \Vert_{L^{p}}^{p-2}. 
\] 
Since 
\[ \Vert \nabla \xi \Vert_{L^{\frac{np}{n+2-p}}([-\frac\pi4, \frac{\pi}4]\times \R^n)}+ \Vert \nabla \xi \Vert_{L^\infty(\R\times \R^n)}  \le  K\] 
and $\frac{np}{n+2-p} \le \frac{2p}{4-p} $ for $ 2 <p \le 4$
we obtain by Young's inequality
\[ I \le K(t) \Vert \nabla v \Vert_{L^2} \Vert v+\xi \Vert_{L^p}^{p-2} \le K(t) \Big( \Vert \nabla v \Vert_{L^2}^2 + \Vert v+\xi \Vert_{L^p}^p \Big)^{\frac{3p-4}{2p}}\]
where 
\[ K(t):= \Vert \nabla \xi(t)\Vert_{L^{\frac{2p}{4-p}}(\R^n)}  \in L^{\frac{2p}{4-p}}([-\frac\pi4,\frac\pi4]).\] 
For $2+\frac2n<p<4$ we obtain a polynomial bound 
\[ \Vert v(t)  \Vert_{\mathcal{H}^1}\le c( 1+ K)^M \] 
for some positive $c$ and $M$.

\end{remark}

\section{Proof of Theorem \ref{4d} for \texorpdfstring{$n=2,3$}{n=2,3}}
 
 We are interested in solving the following Cauchy problems
 \begin{equation}\label{23}
 \begin{cases}
 i\partial_t v - H v =\cos_+^{n-2}(2t)|\xi+v|^2(\xi+v), 
 \\
 v(\tau)=\phi\in {\mathcal H}^1(\R^n)
 \end{cases}
 \end{equation}
 where $n=2,3$, $\tau\in \R $ and $(t, x)\in \R\times \R^n$ and $ \cos_+= |\cos|$.
  \begin{proposition}\label{nes}
Assume $\xi$ satisfies
\eqref{eq:stoch2}, \eqref{eq:stoch3} respectively for $n=2,3$. Let $K$ be the constant defined there.  
Then for every $R>0$ there exists $ \delta>0$ (a function of $R$ and $K$) such that  for every $\phi \in B_R({\mathcal{H}^{1}(\R^n))}$  and for every $\tau \in \R$ 
there exists a unique solution $v\in {\mathcal C}([\tau-\delta, \tau+\delta]; \mathcal{H}^1(\R^n))$ to \eqref{23}. 
\end{proposition}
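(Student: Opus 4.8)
The statement is a local well-posedness result for the non-autonomous cubic NLS \eqref{23} with harmonic-oscillator Hamiltonian $H$, in the energy-subcritical dimensions $n=2,3$, at the regularity $\mathcal H^1$, and — crucially — with a uniform existence time $\delta$ depending only on the radius $R$ of the initial ball and on the stochastic constant $K$, not on the base time $\tau$. The plan is to run a standard contraction-mapping argument on the Duhamel formulation
\[
v(t) = e^{-i(t-\tau)H}\phi - i\int_\tau^t e^{-i(t-t')H}\,\cos_+^{n-2}(2t')\,|\xi+v|^2(\xi+v)(t')\,dt',
\]
in a space $X_\delta = C([\tau-\delta,\tau+\delta];\mathcal H^1)$ intersected, when $n=3$, with an auxiliary Strichartz space. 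The key point making the time $\delta$ independent of $\tau$ is that $\|\xi\|$ is controlled by $K$ \emph{uniformly in $t\in\R$} through the $L^\infty$-in-time bounds in \eqref{eq:stoch2}–\eqref{eq:stoch3} (the pointwise weighted bounds on $\xi$ and $\nabla\xi$ hold on all of $\R\times\R^n$), and the factor $\cos_+^{n-2}(2t)$ is bounded by $1$; so the nonlinearity is estimated by quantities that do not see where $\tau$ sits.

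First I would set up the functional framework. For $n=2$ the equation is $L^2$-subcritical and energy-subcritical; using the Strichartz estimates \eqref{strichD} for $e^{-itH}$ (valid with the same admissibility condition \eqref{condpq} as the flat case) together with the algebra/product estimates and the uniform bound $\|\langle x\rangle\xi\|_{L^\infty}+\|\nabla\xi\|_{L^\infty}\le K$, one checks that the map
\[
\Phi(v)(t)=e^{-i(t-\tau)H}\phi - i\int_\tau^t e^{-i(t-t')H}\cos_+^{n-2}(2t')\,N(\xi+v)(t')\,dt',\qquad N(z)=|z|^2z,
\]
sends the ball $\{\|v\|_{X_\delta}\le 2CR\}$ into itself and is a contraction there, once $\delta$ is small depending only on $R$ and $K$. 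The nonlinearity $N(\xi+v)=|\xi+v|^2(\xi+v)$ expands into terms that are either trilinear in $v$ (handled by $\mathcal H^1$ being an algebra under the relevant product estimates and the subcriticality, which yields a positive power of $\delta$), or involve at least one factor of $\xi$; each such factor contributes either $\|\xi\|_{L^\infty}\le K$ or $\|\nabla\xi\|_{L^\infty}\le K$ (using $\nabla(\langle x\rangle^2 w)$-type Leibniz expansions and \eqref{equivspaces} to pass between $\mathcal H^1$ and $\|\langle x\rangle\,\cdot\|_{L^2}+\|D\,\cdot\|_{L^2}$), so these terms are also bounded by $\delta$ times a polynomial in $R$ and $K$. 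Closing the estimate gives both existence and uniqueness in $X_\delta$ by Banach's fixed point theorem, and the resulting $v$ lies in $C([\tau-\delta,\tau+\delta];\mathcal H^1)$ by the continuity of the Duhamel integral.

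For $n=3$ the pure $\mathcal H^1$ algebra estimate is borderline (cubic nonlinearity, $\dot H^{1/2}$-critical), so I would additionally propagate a Strichartz norm: take the admissible pair $(p,q)$ with, say, $q$ slightly below $6$ (so $\mathcal W^{1,q}\hookrightarrow$ controls $\|v\|_{L^\infty_x}$ via Sobolev), use \eqref{strichD} to bound $\|v\|_{L^p_t\mathcal W^{1,q}_x}$ on $[\tau-\delta,\tau+\delta]$, and estimate the nonlinearity in the dual norm $L^{\tilde p'}_t\mathcal W^{1,\tilde q'}_x$ by Hölder in time (gaining a power of $\delta$) and in space, again peeling off factors of $\xi$ using the uniform bounds $\|\langle x\rangle^{6/5}\xi\|_{L^\infty}+\|\langle x\rangle^{1/5}\nabla\xi\|_{L^\infty}\le K$ from \eqref{eq:stoch3}. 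The extra $L^{12}$-in-time bounds on $\langle x\rangle\xi$ and $\nabla\xi$ in \eqref{eq:stoch3} are there precisely to absorb the time integrations in the mixed terms. The contraction then runs in $X_\delta=C_t\mathcal H^1\cap L^p_t\mathcal W^{1,q}_x$.

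The main obstacle is \textbf{uniformity of $\delta$ in $\tau$}: a naive Gronwall/iteration bound could a priori let the local time shrink as $\tau\to\pm\pi/4$, where the coefficient $\cos_+^{n-2}(2t)$ and (more importantly) the growth of the free evolution $e^{-itH}\phi$ might conspire badly. The resolution is exactly that nothing in the nonlinear estimate depends on $\tau$: $\cos_+^{n-2}(2t)\le 1$ everywhere, the Strichartz constants for $e^{-itH}$ are $\tau$-independent on any interval of length $\le\pi/2$, and the $\xi$-dependent constants are global-in-time bounds by hypothesis. So every constant entering the contraction depends only on $R$ and $K$, and $\delta$ can be chosen accordingly. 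One should note that the two solutions on overlapping intervals agree by the uniqueness part, which is what lets one later iterate this local statement (with the energy bound of Proposition \ref{energyestimate} controlling $R$) up to the endpoints $\tau=\pm\pi/4$.
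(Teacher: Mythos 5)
Your plan is correct and, in its essentials, it is the paper's proof: a Banach fixed point for the Duhamel map on an interval of length comparable to $\delta$, with the cubic term placed in a dual Strichartz norm so that H\"older in time produces a positive power of $\delta$, and with every factor of $\xi$ absorbed through the weighted $L^\infty$ bounds in \eqref{eq:stoch2}--\eqref{eq:stoch3}; since those bounds and the Strichartz constants do not see $\tau$, and $\cos_+^{n-2}(2t)\le 1$, the time $\delta$ depends only on $R$ and $K$, exactly as you argue. The one genuine divergence is the case $n=3$: you enlarge the resolution space to $C_t\mathcal H^1\cap L^p_t\mathcal W^{1,q}_x$ (with $q$ slightly below $6$) on the grounds that the pure $\mathcal H^1$ estimate is ``borderline''. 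That assessment is not accurate: cubic NLS on $\R^3$ is $\dot H^{1/2}$-critical, hence comfortably subcritical at the $\mathcal H^1$ level, and the paper closes the contraction in $C_t\mathcal H^1$ alone via the fixed-time bound $\||\xi+v|^2(\xi+v)\|_{\mathcal W^{1,6/5}(\R^3)}\lesssim K^3+\|v\|_{\mathcal H^1(\R^3)}^3$ (H\"older with $\frac56=\frac12+\frac13$, the embedding $\mathcal H^1(\R^3)\subset L^2\cap L^6$, and the pointwise bounds on $\langle x\rangle^{6/5}\xi$ and $\nabla\xi$), after which the $L^2_t\mathcal W^{1,6/5}_x$ dual norm yields the factor $\delta^{1/2}$. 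Your auxiliary-Strichartz route also works, but it imports machinery the problem does not require, and the $L^{12}$ space-time bounds in \eqref{eq:stoch3} that you invoke are really needed for the energy estimate of Proposition \ref{energyestimate}, not for this local theory. One further small caution for $n=2$: $\mathcal H^1(\R^2)$ is not an algebra; what is used (and suffices) is the embedding $\mathcal H^1(\R^2)\subset L^r(\R^2)$ for all finite $r$, giving $\||\xi+v|^2(\xi+v)\|_{\mathcal W^{1,q}(\R^2)}\lesssim (K+\|v\|_{\mathcal H^1(\R^2)})^3$ for the dual exponent $q<2$, which is precisely the paper's estimate.
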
 

The 
globalization of the solution $v$ to \eqref{ener}  
on the full interval $[-\frac \pi 4, \frac \pi 4]$ readily comes by combining the local existence for \eqref{23} along with the uniform bound provided by Proposition \ref{energyestimate}, which of course can be extended to negative times as well.
As a consequence the Cauchy problem \eqref{ener} admits one unique global solution $v\in
{\mathcal C}([-\frac \pi 4, \frac \pi 4]; \mathcal{H}^1(\R^n))$ defined on the time interval  $[-\frac \pi 4, \frac \pi 4]$. 

We claim that  
the solution $v$ to \eqref{ener} has the following behaviour  
\begin{equation} \label{eq:contin}  \Vert v(t) -e^{-i(t\mp \frac{\pi}4)H} v(\pm \frac\pi4) \Vert_{\mathcal{H}^1(\R^n)} = \left\{ \begin{array}{cl}O(|\pm \frac\pi4-t|^{1-\delta}) 
\quad \forall \delta>0 & \text{ if } n=2 \\   O(|\pm \frac{\pi}4-t|^{\frac32}) \text{ if } n=3 
\end{array}
\right.
\end{equation} 
as $t\rightarrow \pm \frac \pi 4^\mp$.

Notice also that if $u_0$ belongs to the $\mu_\gamma$ full measure sets $\Sigma_n^s$ introduced along Proposition \ref{sto} 
for $n=2,3$ and $s,n$ as in \eqref{eq:sn}, then the corresponding free waves $\xi= e^{itH} u_0$ satisfy \eqref{eq:stoch2}, \eqref{eq:stoch3}.
Hence, based on the discussion done along the introduction, we get that Proposition \ref{nes} and estimate \eqref{eq:contin} imply Theorem \ref{4d}
in the cases $n=2,3$.
In particular recall that \eqref{eq:contin} implies 
\eqref{eq:convergence} for $n=2,3$ 
as discussed in the introduction. To shorten the notation we denote $I= [\tau-\delta, \tau+\delta]$. We have to prove Proposition \ref{nes} and inequality \eqref{eq:contin}. 

The numerology that we shall use will depend on the space dimension,
hence we shall split the proof in two cases, $n=2$ and $n=3$.
We rely on the standard contraction argument and Duhamel formulation, hence we shall not provide all the details. 
For simplicity we shall work for $t>0$, the argument for $t<0$ is the same.

\subsection{Proof of Prop. \ref{nes} for \texorpdfstring{$n=3$}{n=3}} 
\begin{proof}
We define the map $ \Xi$ by 
\begin{equation}\label{duham3} (\Xi (v))(t)= \int_\tau^{t} e^{-i(t-s)H} \cos(2s)|\xi(s)+ v(s)|^2(\xi(s)+ v(s)) ds. \end{equation}
By Strichartz estimates associated with the propagator $e^{itH}$ (see \eqref{strichD}) 
we get
 \[ \|\Xi v(t,x)  \|_{L^{\infty}(I; {\mathcal H}^1(\R^3))}
 \leq 
  C \|\cos (2s)|\xi+ v|^2(\xi+v) \|_{L^2(I; {\mathcal W}^{1, \frac65}(\R^3))}.
  \]
Next we estimate 
the nonlinear term for fixed time as follows:
\[  \| |\xi+v|^2(\xi+v)\|_{{\mathcal W}^{1, \frac 65}(\R^3))}
\\  \leq C  \| \nabla |\xi+v|^2(\xi+ v)\|_{L^\frac 65(\R^3)} + C  \| \langle x \rangle |\xi+v|^2 (\xi+ v)\|_{L^\frac 65(\R^3)}
\] 
hence  we get (since it does not matter of which term one takes the complex conjugate) 
\[ 
\begin{split}
\hspace{1cm} & \hspace{-1cm}    
\|\nabla |\xi+v|^2(\xi+v)\|_{L^\frac 65(\R^3)} \leq 3 \| (|v| +|\xi|)^2 (|\nabla v | + |\nabla \xi|) \|_{L^\frac 65(\R^3)}
\\ & \leq 6 \Big[ \|\nabla v\|_{L^2(\R^3)} (\|v\|_{L^6(\R^3)}^2 +  K^2) 
+ K\| v \|^2_{L^{\frac{12}{5}}} + CK^3 \Big] 
\\ &  \leq C(K^3+\|v\|_{\mathcal H^1(\R^3)}^3)
\end{split}
\] 
where we have used \eqref{eq:stoch3} and the Sobolev embedding ${\mathcal H}^1(\R^3)\subset L^2(\R^3)\cap L^6(\R^3)$.
Similarly one can prove
$$\||\langle x \rangle (v+\xi)| |v+\xi|^2\|_{L^\frac 65(\R^3)}
\leq C(K^3+\|v\|_{\mathcal H^1(\R^3)}^3)$$
and hence summarizing we get\
\[
\|\Xi v(t,x)\|_{L^{\infty}(I; {\mathcal H}^1(\R^3))}
\leq   C  \delta^{\frac12}(K^3+ \|v\|_{L^{\infty}(I; {\mathcal H}^1(\R^3))}^3). \] 
We search a fixed point of the map 
\[  v = e^{-i(t-\tau)H} \phi + \Xi(v). \]
Let $R = 2( \Vert \phi \Vert_{\mathcal{H}^1}+K) $ and 
\[ \delta = \frac{1}{ 9 C^2 R^4}. \] 
Then, if $ \Vert v \Vert_{L^\infty(I; \mathcal{H}^1)} < R$ 
\[  \Vert e^{-i(t-\tau)H} \phi + \Xi(v) \Vert_{L^\infty(I; \mathcal{H}^1)}
<  \frac{R}2 +   3C \delta^{\frac12}  R^3 \le   R.  \] 
Decreasing $\delta $ is necessary we obtain a strict contraction, and hence a unique solution. 
\end{proof}

Finally notice that by fixing the initial condition at $\tau=\frac \pi4$ as $\varphi=v(\frac \pi 4)$, 
by recalling that $v$ is a fixed point of the operator
\eqref{duham3} and by using the nonlinear estimate above we get with $I= (T,\frac{\pi}4)$
\[
\Big\|v(t,x)- e^{-i(t-\frac \pi 4)H} v(\frac \pi 4)  \Big\|_{L^{\infty}(I; {\mathcal H}^1(\R^3))}
\leq C(1+\|v\|_{L^{\infty}(I; {\mathcal H}^1(\R^3))}^3) \big (\int_{T}^{\frac \pi 4} \cos^2(2s) ds\big )^\frac 12
\]
and we conclude, since by Section \ref{sec:energy}  
$\max\limits_{t\in [0, \frac \pi 4]} \|v(t)\|_{{\mathcal H}^1}
<\infty$, the bound  
$$\Big\|v(t,x)- e^{-i(t-\frac \pi 4)H} v\big(\frac \pi 4\big)  \Big\|_{L^{\infty}((\frac \pi 4-T, \frac \pi 4); {\mathcal H}^1(\R^3))}\le C |T-\frac \pi 4|^\frac 32$$
and hence we obtain \eqref{eq:contin} for $n=3$.

\subsection{Proof of Prop. \ref{nes} for \texorpdfstring{$n=2$}{n=2}}
\begin{proof} 
The proof is similar to the case $n=3$ except that we have to adapt the Strichartz numerology.
We shall use the following
Strichartz estimate with any dual Strichartz pair such that $ \frac2p+\frac2q = 3 $, $ 1< p < 2$ in order to perform a fixed point argument: (recall the definition of $\Xi$ in \eqref{duham3}) 
\begin{equation}\label{duham2}  \| \Xi (v) \|_{L^{\infty}(I; {\mathcal H}^1(\R^2))}
\leq  
 C \||\xi+v|^2(\xi+v)\|_{L^p(I;{\mathcal W}^{1, q}(\R^2))}.
 \end{equation} 
 Then for every fixed time $t\in I$ we get:
 \[ 
 \begin{split}\hspace{1cm} & \hspace{-1cm} \|\nabla |\xi+v|^2(\xi+v)\|_{L^q(\R^2)}\leq  \| (|\xi|+ |v|)^2(|\nabla \xi| +| \nabla v|) \|_{L^q(\R^2)}
\\ &  \leq  \|\nabla v\|_{L^2} \|v\|_{L^\frac{4q}{2-q}(\R^2)}^2
+  K^3 + 3 K^2\|\nabla v\|_{L^2(\R^2)}
+ 3 K \|v\|_{L^{2q}(\R^2)}^2
\\ & \leq  C(K+\|v\|_{\mathcal H^1(\R^2)})^3
\end{split}
\] 
 where we have used the 
 Sobolev embedding $\mathcal H^1(\R^2)\subset L^r(\R^2)$ for every $r\in [2, \infty)$
 and the stochastic bound \eqref{eq:stoch2}.
Similarly we get
$$\|\langle x \rangle |\xi +v|^2(\xi+v)\|_{L^q(\R^2)}\leq C(K+\|v\|_{\mathcal H^1(\R^2)})^3$$
and hence by \eqref{duham2} 
 $$\|\Xi( v) \|_{L^{\infty}(I; {\mathcal H}^1(\R^2))}\leq C \delta^\frac 1p \| v \|^3_{L^{\infty}(I; {\mathcal H}^1(\R^2))} 
 $$
where $p\in (1, 2)$. We argue as in the case $n=3$ with a fixed point argument to obtain existence of a unique solutions by choosing $\delta$ small as a function of $K$ and $ \Vert \phi \Vert_{\mathcal{H}^1}$.
\end{proof}

Moreover by fixing, as in the case $n=3$ the initial condition at time $\tau=\frac \pi 4$ and $\varphi=v(\frac \pi 4)$
we get 
$$\| v(t,x)- e^{-i(t-\frac \pi 4) H} v(\frac \pi 4) \|_{L^{\infty}((T, \frac \pi 4); {\mathcal H}^1(\R^2))}=O(|T-\frac \pi 4|^\frac 1p)$$
and hence we conclude \eqref{eq:contin} for $n=2$ since $p$ can be choosen arbitrary close to $1$.

\section{Proof of Theorem \ref{4d} for \texorpdfstring{$n=4$}{n=4}}
We are interested in solving the following Cauchy problems 
 \begin{equation}\label{234tau}
 \begin{cases}
 i\partial_t v - H v =\cos^{2}(2t)|\xi+ v|^2(\xi+v), 
 \\
 v(\tau)=\phi\in {\mathcal H}^1(\R^4)
 \end{cases}
 \end{equation}
 where  $\tau\in \R_+$ and $(t, x)\in \R_+\times \R^4$. 
  \begin{proposition}\label{nes4}
Assume $\xi$ satisfies
\eqref{eq:stoch4} and let $K$ be the constant defined there. 
Then for every $R>0$ there exists $ \delta >0$ (a function of $R$ and $K$ ) such that  
for every $\phi \in B_R({\mathcal{H}^{1}(\R^4))}$,  and for every $\tau \in [0,\frac\pi4)$  there exists a unique solution $v\in {\mathcal C}([\tau, \min\{\frac\pi4,\tau+\delta\} ]; \mathcal{H}^1(\R^4))\cap L^4([\tau,\min\{\frac{\pi}4,\tau+\delta\}  ]; L^8(\R^4)) $ to \eqref{234tau}.
Moreover there exists a function $M:[0,\infty)\to [0,\infty)$ so that   
\[ \Vert v \Vert_{L^\infty((\tau,\min\{\frac\pi4,\tau+\delta\} ); \mathcal{H}^1)}+\Vert v \Vert_{L^4((\tau,\min\{\frac\pi4,\tau+\delta\} ); L^8)}
 \le M(\Vert \phi \Vert_{\mathcal{H}^1}). \] 
\end{proposition}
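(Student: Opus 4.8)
The plan is to construct $v$ by a Cazenave--Weissler-type fixed point argument in a Strichartz space adapted to the $\dot{\mathcal H}^1$-critical scaling. Writing \eqref{234tau} in Duhamel form,
\[ v = e^{-i(t-\tau)H}\phi + \Xi(v), \qquad \Xi(v)(t)=\int_\tau^{t} e^{-i(t-s)H}\cos^2(2s)\,|\xi(s)+v(s)|^2(\xi(s)+v(s))\,ds , \]
I would seek a fixed point on $I'=[\tau,T']$ with $T'=\min\{\tfrac\pi4,\tau+\delta\}$, in the space $X=C(I';{\mathcal H}^1(\R^4))\cap L^4(I';{\mathcal W}^{1,8/3}(\R^4))$. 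The pair $(4,\tfrac83)$ is admissible in the sense of \eqref{condpq} for $n=4$; moreover ${\mathcal W}^{1,8/3}(\R^4)\hookrightarrow L^8(\R^4)$ by \eqref{equivspaces} and Sobolev embedding, and ${\mathcal H}^1(\R^4)\hookrightarrow L^p(\R^4)$ for $2\le p<4$. Thus membership in $X$ yields exactly the regularity claimed in the proposition, and the final bound $\Vert v\Vert_{L^\infty_t{\mathcal H}^1}+\Vert v\Vert_{L^4_tL^8}\le M(\Vert\phi\Vert_{{\mathcal H}^1})$ will be read off the ball in which the contraction is performed, with $M$ affine in its argument.

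The core input is the inhomogeneous Strichartz estimate \eqref{strichD} with the admissible pair $(4,\tfrac83)$ and its dual $(\tfrac43,\tfrac85)$, which (using $\cos_+^2\le 1$) gives
\[ \Vert e^{-i(t-\tau)H}\phi+\Xi(v)\Vert_X\le C\Vert\phi\Vert_{{\mathcal H}^1}+C\big\Vert\,|\xi+v|^2(\xi+v)\,\big\Vert_{L^{4/3}(I';{\mathcal W}^{1,8/5})}, \]
so everything reduces to a trilinear estimate on the last term. By \eqref{equivspaces} it suffices to bound $\nabla$ and $\langle x\rangle$ applied to each degree-three monomial in $\{\xi,\bar\xi,v,\bar v\}$ in $L^{4/3}(I';L^{8/5}(\R^4))$. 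For every monomial containing at least one factor $\xi$ or $\nabla\xi$, I would use the stochastic bounds \eqref{eq:stoch4}: place the $\xi$-factor in $L^\infty_x$ with its weight $\langle x\rangle^{-7/5}$ and $\nabla\xi$ either in $L^\infty_x$ with weight $\langle x\rangle^{-2/5}$ or in $L^8_x$, exploit $\Vert\xi\Vert_{L^{8/3}_{t,x}}+\Vert\langle x\rangle\xi\Vert_{L^8_{t,x}}+\Vert\nabla\xi\Vert_{L^8_{t,x}}\le K$, and put the remaining $v$-factors in Lebesgue spaces controlled by $\Vert v\Vert_{L^\infty_t{\mathcal H}^1}$ and $\Vert v\Vert_{L^4_t{\mathcal W}^{1,8/3}}$; Hölder in time then produces a positive power $\delta^{\theta}$ in front of such terms, and the purely-$\xi$ monomial is estimated directly by $C\delta^{\theta}K^3$ using the pointwise decay. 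The only monomials that do \emph{not} improve with $\delta$ are the purely-$v$ ones, $|v|^2\nabla v$ and $\langle x\rangle|v|^2v$, which (Hölder in $x$ with $\tfrac58=\tfrac18+\tfrac18+\tfrac38$, using \eqref{equivspaces} for the weighted one, and in $t$ with $\tfrac34=\tfrac14+\tfrac14+\tfrac14$) are bounded by $C\Vert v\Vert_{L^4_tL^8}^2\Vert v\Vert_{L^4_t{\mathcal W}^{1,8/3}}\le C\Vert v\Vert_X^3$. Altogether this gives $\Vert\Xi(v)\Vert_X\le C\Vert v\Vert_X^3+C\delta^{\theta}\big(K^3+K^2\Vert v\Vert_X+K\Vert v\Vert_X^2\big)$, with an entirely parallel bound for $\Xi(v_1)-\Xi(v_2)$.

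Because of the scale-invariant term $C\Vert v\Vert_X^3$, one cannot close in a ball of $X$ of radius comparable to $\Vert\phi\Vert_{{\mathcal H}^1}$ when the latter is large; instead I would fix a small $\eta=\eta(C)$ with $C\eta^3\le\tfrac\eta4$ and contract in $B=\{v\in X:\Vert v\Vert_{L^\infty(I';{\mathcal H}^1)}\le 2C\Vert\phi\Vert_{{\mathcal H}^1},\ \Vert v\Vert_{L^4(I';{\mathcal W}^{1,8/3})}\le\eta\}$. The ${\mathcal H}^1$-radius is preserved since $\Vert e^{-i(t-\tau)H}\phi\Vert_{L^\infty_t{\mathcal H}^1}=\Vert\phi\Vert_{{\mathcal H}^1}$ and the nonlinear contribution to that component is $\le \tfrac\eta4+C\delta^{\theta}(\cdots)$; the $L^4_t{\mathcal W}^{1,8/3}$-radius is preserved because $C\eta^3\le\tfrac\eta4$ and the remaining pieces, including $\Vert e^{-i(t-\tau)H}\phi\Vert_{L^4(I';{\mathcal W}^{1,8/3})}$, can be made $\le\tfrac\eta4$ each by taking $\delta$ small. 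I expect the delicate step — and the genuine difference with the subcritical cases $n=2,3$ — to be exactly the last point: ensuring that $\Vert e^{-i(t-\tau)H}\phi\Vert_{L^4((-\frac\pi4,\frac\pi4);{\mathcal W}^{1,8/3})}$ has uniformly small tails near $\tau$ for $\phi\in B_R({\mathcal H}^1(\R^4))$, so that a single $\delta=\delta(R,K)$ works; one has $\Vert e^{-itH}\phi\Vert_{L^4((-\frac\pi4,\frac\pi4);{\mathcal W}^{1,8/3})}\le C\Vert\phi\Vert_{{\mathcal H}^1}$ from \eqref{strichD} with vanishing nonlinearity, and absolute continuity of the integral makes this norm over $[\tau,T']$ small as $\delta\to 0$, but the uniformity in $\phi$ is where the structure of the harmonic-oscillator propagator on the fixed strip must be used carefully. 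Once $B$ is stable and the map is a contraction on it, one obtains a unique $v\in X$; uniqueness in the full class $C(I';{\mathcal H}^1)\cap L^4(I';L^8)$, the upgrade from weak to strong continuity in ${\mathcal H}^1$, and the asserted bound $M(\Vert\phi\Vert_{{\mathcal H}^1})$ then follow by the standard Strichartz bookkeeping.
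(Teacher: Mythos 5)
There is a genuine gap at exactly the point you yourself flag as ``delicate'', and it cannot be repaired within your scheme. Your contraction closes only if $\Vert e^{-i(t-\tau)H}\phi\Vert_{L^4([\tau,\tau+\delta];\mathcal{W}^{1,8/3}(\R^4))}$ can be pushed below the fixed threshold $\eta$ by choosing $\delta$ as a function of $R=\Vert\phi\Vert_{\mathcal{H}^1}$ and $K$ only. For the energy-critical problem this uniform smallness is false. Take $\phi_\lambda(x)=\lambda\,\psi(\lambda x)$ on $\R^4$: then $\Vert\nabla\phi_\lambda\Vert_{L^2}=\Vert\nabla\psi\Vert_{L^2}$ while $\Vert\phi_\lambda\Vert_{L^2}$ and $\Vert |x|\phi_\lambda\Vert_{L^2}$ tend to $0$, so $\Vert\phi_\lambda\Vert_{\mathcal{H}^1}$ is uniformly bounded; on the other hand, for the free flow one has exactly $\Vert e^{it\Delta}\phi_\lambda\Vert_{L^4([0,\delta];\dot W^{1,8/3})}=\Vert e^{is\Delta}\psi\Vert_{L^4([0,\lambda^2\delta];\dot W^{1,8/3})}$, which for $\lambda$ large is bounded below by a fixed positive constant however small $\delta$ is, and via the relation $e^{-i\tau H}=M_{t(\tau)}\circ S_{1/\cos(2\tau)}\circ e^{it(\tau)\Delta}$ the same non-smallness holds for the harmonic-oscillator flow on short windows. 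Hence with your argument $\delta$ necessarily depends on the profile of $\phi$, not only on its norm; this is the standard critical phenomenon, and in fact the paper performs your construction verbatim as Step~1 of Lemma~\ref{approxbu}, stating explicitly that there ``$\delta$ depends on $\phi$, and not only its norm''. A profile-dependent $\delta$ does not prove Proposition~\ref{nes4}: the uniformity over $B_R(\mathcal{H}^1)$ is the whole content of the statement, since it is what allows the iteration against the uniform energy bound of Proposition~\ref{energyestimate} to reach $\tau=\frac{\pi}{4}$. (If your route did work, it would give global well-posedness of the deterministic energy-critical cubic NLS on $\R^4$ by contraction plus energy decay alone, bypassing \cite{RV}; that is a strong sign it cannot.)

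The paper's mechanism is genuinely different: it perturbs around the deterministic solution rather than around the free evolution of $\phi$. One writes $v=w+R$, where $w$ solves \eqref{apporxim} with the same datum $\phi$ and, through the inverse lens transform and the Ryckman--Visan theorem, satisfies the global bounds of Lemma~\ref{approxbu}, in particular $\Vert w\Vert_{L^4([\tau,\frac{\pi}{4});L^8)}\le M(\Vert\phi\Vert_{\mathcal{H}^1})$. In the equation for $R$ the cubic term $|w|^2w$ cancels, so the $R$-independent source consists only of monomials containing at least one factor of $\xi$ and is $O(\delta^{1/4})$ by \eqref{eq:stoch4}; the part linear in $R$, with potential $(w+\xi)$, is handled by Lemma~\ref{stet}, whose constant is controlled by splitting the interval into finitely many pieces on which $\Vert w\Vert_{L^4L^8}$ is small, the number of pieces depending only on $M(\Vert\phi\Vert_{\mathcal{H}^1})$. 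Since $R(\tau)=0$ and the source is small, the fixed point for $R$ closes with $\delta=\delta(K,\Vert\phi\Vert_{\mathcal{H}^1})$ --- exactly the uniformity your direct contraction cannot deliver. Your trilinear estimates are fine as far as they go, but without the Ryckman--Visan global spacetime bound (or an equivalent substitute) the proposition as stated is out of reach.
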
 

As in the subcritical case 
we deduce that  the Cauchy problem \eqref{ener} admits one unique global solution $v\in
{\mathcal C}([-\frac \pi 4, \frac \pi 4]; \mathcal{H}^1(\R^4))$ defined on the strip $[-\frac \pi 4, \frac \pi 4]\times \R^4$. We claim that 
the solution $v$ to \eqref{ener} has the following behaviour  
\begin{equation} \label{eq:contin4}  \Vert v(t) -e^{-i(t\mp \frac{\pi}4)H} v(\pm \frac\pi4) \Vert_{\mathcal{H}^1(\R^4)} = O(|\pm \frac{\pi}4 -t|^2)  
\end{equation} 
as $t\rightarrow \pm \frac \pi 4$.  As in the subcritical case Theorem \ref{4d} follows from Proposition \ref{nes4} and   estimate \eqref{eq:contin4}. Our approach will be based on the  estimate for $  I=[\tau,\tau+\delta]$
  \begin{equation} \label{eq:strichartz}   \Vert u \Vert_{L^\infty(I; \mathcal{H}^1(\R^4))} + \Vert u \Vert_{L^4(I; L^8(\R^4))} \lesssim   \Vert u(\tau ) \Vert_{\mathcal{H}^1(\R^4)} +  \Vert i\partial_t u - Hu \Vert_{L^2(I; \mathcal{W}^{1,\frac43}(\R^4)) },  
 \end{equation} 
 which is a consequence of the Strichartz estimates combined with the Sobolev inequality $ \Vert u(t) \Vert_{L^8(\R^4)} \leq C \Vert \nabla u(t) \Vert_{L^{\frac83}(\R^4)} $.
We  assume that $\xi= e^{itH} u_0$ satisfies \eqref{eq:stoch4} and $K$ will be the  quantity defined therein.
Due to Proposition \ref{sto}
this property is true almost surely w.r.t. $\mu_\gamma$ by assuming the condition \eqref{eq:sn} for $n=4$. We shall split the proof of Proposition \ref{nes4} into several subsections.

\subsection{The global existence and bounds by Ryckman and Visan}

The crucial ingredient is the result of Ryckman and Visan \cite{RV} about global existence and bounds for the equation without noise.  The following lemma is a consequence. 
 \begin{lemme}\label{approxbu}
For every  $\tau\in (0, \frac{\pi}4)$ and $ \phi \in \mathcal{H}^1(\R^4)$ there exists a solution $$w \in {\mathcal C}([\tau, \frac \pi 4); {\mathcal H}^1(\R^4))\cap L^4((\tau, \frac \pi 4);L^8(\R^4)),$$  to 
the Cauchy problem
\begin{equation}\label{apporxim}
\begin{cases}
i\partial_t w - H w= \cos^2(2t) w|w|^2, \quad (t, x)\in [\tau, \frac \pi4)\times \R^4\\
w(\tau)= \phi.
\end{cases}
\end{equation}
Moreover  there exists a monotone function $M: [0,\infty) \to [0,\infty)$ so that 
\begin{equation}\label{unifL6bound}
\|w\|_{L^4((\tau, \frac \pi 4);L^8(\R^4))}\le M( \Vert \phi\Vert_{\mathcal{H}^1})  \end{equation}
and 
\begin{equation}\label{uniformH1bound}
\|w\|^2_{L^\infty((\tau, \frac \pi 4); {\mathcal H}^1(\R^4))} \le   \Vert \phi \Vert^2_{\mathcal{H}^1} +  \frac12 \Vert \phi \Vert^4_{L^4}  .\end{equation}
\end{lemme}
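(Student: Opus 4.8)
The plan is to deduce this lemma from the theorem of Ryckman--Visan \cite{RV} on global well-posedness and scattering for the energy-critical cubic NLS on $\R^4$, by transporting their result through the lens transform. First I would undo the (time-dependent) lens change of variables: if $w$ solves \eqref{apporxim} on $[\tau,\frac\pi4)$ with data $\phi$ at time $\tau$, then setting $t(\sigma)=\frac{\tan(2\sigma)}2$ and applying the inverse of \eqref{eq:lens}, the function
\[ U(t,x) = \frac1{(1+4t^2)^{n/4}}\, w\Big(\tfrac{\arctan(2t)}2, \tfrac{x}{\sqrt{1+4t^2}}\Big) e^{i|x|^2 t/(1+4t^2)} \]
(with $n=4$) solves the autonomous energy-critical cubic NLS $i\partial_t U + \Delta U = |U|^2 U$ on the time interval $t\in[t(\tau), +\infty)$, since $\sigma=\frac\pi4$ corresponds to $t=+\infty$; the $\cos^2(2\sigma)$ weight in \eqref{apporxim} is exactly what the lens transform produces from the unweighted cubic nonlinearity in dimension $4$ (see Remark \ref{rem:sub}). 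The data $U(t(\tau))$ lies in $H^1(\R^4)$ with norm controlled by $\Vert\phi\Vert_{\mathcal H^1}$ (the weight $\langle x\rangle$ in the $\mathcal H^1$ norm is precisely what is needed to make the multiplier $e^{i|x|^2t(\tau)}$ and the rescaling harmless — this is a standard mapping property of the lens transform between $\mathcal H^1$ and $H^1$).

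Then I would invoke \cite{RV}: the Cauchy problem for $i\partial_t U + \Delta U = |U|^2 U$ with $H^1(\R^4)$ data is globally well-posed and the global spacetime norm $\Vert U\Vert_{L^3_{t,x}(\R\times\R^4)}$ (or any admissible energy-critical Strichartz norm) is bounded by a function of $\Vert U(t(\tau))\Vert_{H^1}$ only. Transporting this bound back through the lens transform turns the global-in-$t$ Strichartz bound into a bound on $[\tau,\frac\pi4)$ for the lens variable; a direct computation from \eqref{eq:lens} shows that the energy-critical Strichartz norm of $U$ on $[t(\tau),\infty)$ equals (up to constants) $\Vert w\Vert_{L^3((\tau,\frac\pi4);L^3(\R^4))}$, and by interpolation with Strichartz / Sobolev one converts this into the stated $L^4((\tau,\frac\pi4);L^8(\R^4))$ bound, giving \eqref{unifL6bound}. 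Alternatively, and perhaps more cleanly, I would keep the computation entirely on the lens side: once the global spacetime bound is known to be finite, the Strichartz estimate \eqref{strichD} together with the nonlinear estimate (as in the proof of Proposition \ref{nes4}) bootstraps membership in ${\mathcal C}([\tau,\frac\pi4);\mathcal H^1)\cap L^4((\tau,\frac\pi4);L^8)$.

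For \eqref{uniformH1bound} I would argue directly on \eqref{apporxim} with a conservation-of-energy computation: multiply the equation by $\partial_t\bar w$, take real parts, and integrate; the energy ${\mathcal E}(w)=\frac12\Vert w\Vert_{\mathcal H^1}^2 + \frac14\cos^2(2t)\Vert w\Vert_{L^4}^4$ has derivative $-\frac12\sin(4t)\,\frac14\cdot\frac{d}{dt}\!\big(\cos^2(2t)\big)^{-1}\!\cdots$ — more precisely $\frac{d}{dt}{\mathcal E}(w) = -\sin(2t)\cos(2t)\Vert w\Vert_{L^4}^4 \le 0$ for $t\in[\tau,\frac\pi4)$ since $\sin(2t),\cos(2t)\ge0$ there. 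Hence ${\mathcal E}(w(t))\le{\mathcal E}(w(\tau)) = \frac12\Vert\phi\Vert_{\mathcal H^1}^2 + \frac14\cos^2(2\tau)\Vert\phi\Vert_{L^4}^4 \le \frac12\Vert\phi\Vert_{\mathcal H^1}^2 + \frac14\Vert\phi\Vert_{L^4}^4$, and dropping the (nonnegative) potential term gives $\Vert w(t)\Vert_{\mathcal H^1}^2 \le \Vert\phi\Vert_{\mathcal H^1}^2 + \frac12\Vert\phi\Vert_{L^4}^4$, which is \eqref{uniformH1bound}. The monotonicity of $M$ can be arranged by replacing $M$ with $r\mapsto\sup_{\rho\le r}M(\rho)$. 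The main obstacle is the careful verification of the mapping properties of the lens transform at the endpoint $\sigma=\frac\pi4$ (i.e. $t=+\infty$): one must check that $w\in{\mathcal C}([\tau,\frac\pi4);\mathcal H^1)$ corresponds exactly to $U\in{\mathcal C}([t(\tau),+\infty);H^1)$ and that the global Strichartz bound of \cite{RV} really does control the lens-side norm up to (but not including) $\sigma=\frac\pi4$; once this dictionary is in place the rest is the standard contraction/bootstrap already used in Proposition \ref{nes4}.
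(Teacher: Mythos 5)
Your overall strategy is the same as the paper's (lens transform to reduce to Ryckman--Visan, plus the monotone energy ${\mathcal E}(w)=\frac12\|w\|_{\mathcal H^1}^2+\frac{\cos^2(2t)}4\|w\|_{L^4}^4$ for \eqref{uniformH1bound}, which you reproduce correctly), but there is a genuine gap at the step where you convert the Ryckman--Visan global bound into the unweighted norm $\|w\|_{L^4((\tau,\frac\pi4);L^8(\R^4))}$ of \eqref{unifL6bound}. The lens transform does \emph{not} carry the $u$-side $L^4_tL^8_x$ bound to this norm: a direct computation (this is \eqref{estimL4L8} in the paper) gives $\|u\|_{L^4((\frac{\tan 2\tau}2,\frac{\tan 2T}2);L^8)}=\|\cos(2t)\,w\|_{L^4((\tau,T);L^8)}$, and the weight $\cos(2t)$ degenerates exactly at the endpoint $t=\frac\pi4$ you care about. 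Your proposed repair by interpolation also fails: the only Lebesgue space-time norm the lens transform preserves exactly is $L^3_{t,x}$ (which is the mass-level admissible norm, not the energy-critical one, the latter being $L^6_{t,x}$ in $\R^4$ and picking up a $\cos^6$ weight), and interpolating $L^3_tL^3_x$ against the energy bound $L^\infty_t\mathcal H^1\subset L^\infty_tL^4_x$ cannot reach $L^4_tL^8_x$ (the interpolation parameter comes out as $3/2>1$). So after transporting the Ryckman--Visan bound you only control the weighted norm, and \eqref{unifL6bound} does not yet follow.

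The missing ingredient is precisely what the paper's Steps 1 and 4 supply: a lens-side local theory in $L^\infty\mathcal H^1\cap L^4L^8$ by a fixed point based on smallness of the free evolution $e^{-i(t-\tau)H}\phi$ in $L^4L^8$ (so the existence interval depends on the profile $\phi$, not only its norm --- unavoidable in the energy-critical setting), together with the remark that the nonlinearity carries the factor $\cos^2(2t)$, so that once $\|\cos(2\cdot)\,e^{-i(t-\tau')H}w(\tau')\|_{L^4([\tau',\frac\pi4);L^8)}$ is small the contraction runs on the whole interval $[\tau',\frac\pi4)$ and Strichartz on that bounded interval yields the \emph{unweighted} $L^4L^8$ bound up to $\frac\pi4$; the weighted bound inherited from \cite{RV} (via uniqueness, to identify the lens image of $w$ with the Ryckman--Visan solution) plus the energy bound then close a continuation argument showing the maximal time is $\frac\pi4$. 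Your parenthetical alternative ("keep the computation entirely on the lens side and bootstrap with \eqref{strichD}") is this argument in embryo and is the right fix, but as written it is not carried out, and without it neither the existence of $w$ on all of $[\tau,\frac\pi4)$ in the stated class nor \eqref{unifL6bound} is established; note also that constructing $w$ as the lens image of the Ryckman--Visan solution requires, in addition, persistence of the weighted ($\mathcal H^1$-type) regularity for that solution, a point the paper avoids by building $w$ directly on the lens side.
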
 
\begin{proof}{\bf Step 1. Wellposedness on an interval whose length depends on $ \phi$, not on its norm.}
We claim that there exists a unique weak solution on a small time interval. 
Let $ u = e^{-i(t-\tau)H} \phi \in L^\infty(\R ; \mathcal{H}^1) \cap L^4( I; L^8( \R^4)) $. We make the Ansatz $w = v+ u$ where $v(\tau)=0$,  
\[ i\partial_t  v - H v = \cos^2(2t)  |v+u|^2 (v+u). \] 
and 
\[ \Xi(v)= \int^t_\tau  e^{-i (t-s) H}\cos^2(2s) |v+u|^2 (v-u) ds. \] 
We define 
\[ \vvvert f \vvvert =  \Vert f \Vert_{L^\infty(I;\mathcal{H}^1)} + \Vert f \Vert_{L^4(I; L^8(\R^4))}. \] 
For $\delta >0$ and $I= [\tau,\tau+\delta]$  we obtain with absolut implicit constants
\[ 
\begin{split} 
\vvvert \Xi(v) \vvvert \, &   \lesssim \Vert |v+u|^2 (v+u) \Vert_{L^2(I; \mathcal{W}^{1,\frac43}) } 
\\ &  \lesssim   (\Vert v \Vert^2_{L^4(I; L^8(\R^4))} +\Vert u \Vert^2_{L^4(I; L^8(\R^4))}) ( \Vert v \Vert_{L^\infty(I; \mathcal{H}^1)} +\Vert u \Vert_{L^\infty(I; \mathcal{H}^1) }).
\\ & \lesssim (\vvvert v \vvvert + \Vert u \Vert_{L^4(I; L^8(\R^4))})^2 (\vvvert v \vvvert + \Vert \phi \Vert_{\mathcal{H}^1} ). 
\end{split} 
\] 
We may choose $\delta $ small so that $\Vert u \Vert_{L^4([\tau, \tau+\delta]; L^8(\R^4))}$ becomes small. It is now easy to construct a solution by a fixed point argument. This time $\delta$ depends on $\phi$, and not only its norm. The same argument can be applied to the left of $ \tau$ and we obtain a solution on an interval $(\tau-\delta,\tau+\delta)$.
More precisely, if we take the factor $\cos^2(2\tau) $
 into account  we obtain an existence intervall $[\tau ,\frac{\pi} 4)$
provided 
\[ \Vert \cos(2.) u \Vert_{L^4([\tau,\pi/4); L^8} \le c \cos(2\tau) \Vert \phi \Vert_{\mathcal{H}^1} \]
is sufficiently small.

 \bigskip

\noindent{\bf Step 2: The energy estimate.}
We next  establish \eqref{uniformH1bound} on the interval $[\tau, T)$, assuming 
that 
\[ w \in L^\infty([\tau,T]; \mathcal{H}^1(\R^4)) \] 
is a weak solution. This is a simpler version of the energy estimate in Section \ref{sec:energy}. Again we argue formally and calculate
\[\frac{d}{dt} \big(\frac12\|w(t)\|_{{\mathcal H}^1(\R^4)}^2 + \frac{\cos^2(2t)}4 \|w(t)\|_{L^4(\R^4)}^4\big)  =-\sin (2t) \cos(2t)\|w(t)\|_{L^4(\R^4)}^4, \]
hence the energy is monotonically decreasing and thus \eqref{uniformH1bound} holds. 

\bigskip 

\noindent{\bf Step 3. Consequences of the global estimate by Ryckman and Visan.}

Next we consider the solution to the following classical NLS on $\R^4$ (which corresponds to \eqref{apporxim}
up to inverse lens transform):
\begin{equation}\label{pseudoconflens}\begin{cases}
i\partial_t u + \Delta u= u|u|^2, \quad (t,y)\in \R\times \R^4\\
u(\frac{\tan (2\tau)}2,y)= e^{i\frac{\sin(2\tau)\cos(2\tau)}2|y|^2}\cos^2(2\tau) \phi( \cos(2\tau) y)\end{cases}
\end{equation}
and observe 
\[
\Big\Vert \nabla u (\frac{\tan (2\tau)}2)  \Big\Vert_{L^2(\R^4)}^2 \leq 2 \cos^2(2\tau)  \Vert \nabla \phi\Vert_{L^2}^2  + 2  \sin^2(2\tau)  
\Vert y \phi  \Vert_{L^2(\R^4)}^2 
\le  2\Vert \phi  \Vert^2_{\mathcal{H}^1}. 
\]
Then by the analysis in \cite{RV} (more precisely see Lemma 3.6 in \cite{RV}) of the cubic NLS on $\R^4$ we get that
\eqref{pseudoconflens} admits one unique global solution with the following uniform bound for some monotone function $\tilde M$
\begin{equation}\label{L6utau}      \|u \|_{L^4 (\R; L^8(\R^4))}\le \tilde M( \Vert \nabla \phi \Vert_{L^2}).\end{equation}
Moreover we have by direct computation
\begin{equation}\label{estimL4L8}\|u\|_{L^4(
\frac{\tan(2\tau)}2, \frac{\tan(2T)}2);L^8(\R^4))}=\|\cos(2t) w \|_{L^4((\tau,T);L^8(\R^4))}.
\end{equation}

\bigskip 

\noindent{\bf Step 4: Conclusion.}
Let $\tau < T \le \frac{\pi}4$ and assume that 
 \begin{equation} \label{eq:conclusion}  w \in C([\tau, T); \mathcal{H}^1) \cap \bigcap_{S< T} L^4( [\tau, S); L^8(\R^4))\end{equation}
 is a weak solution and that $T$ is the maximal time for which a weak solution exists. Given any $ t \in [\tau,T]$ there is a unique solution with initial datum $\phi= w(t)$, hence this weak solution is unique. By Step 2 $\Vert w(t) \Vert_{\mathcal{H}^1}$ is uniformly bounded. Inverting the lens transform we obtain a weak solution $u$ to 
 \[ i\partial_t u + \Delta u = |u|^2 u \]
which is (by the $L^4 L^8$ bound)  the solution of Ryckman and Visan 
and hence 
\[ \Vert \cos(t) w \Vert_{L^4([\tau, T]; L^8)} \le M(\Vert \phi \Vert_{\mathcal{H}^1}). \]
As discussed in Section \ref{sec:energy}  the weak limit 
\[   \lim_{ t\to T} w(t) =: \psi \in \mathcal{H}^1 \] 
exists.
If $ T < \frac{\pi}4 $ we apply Step 1 and obtain a weak solution on an interval $[T,T+\delta)$ which we can combine to a weak solution on 
$[\tau, T+\delta)$, contradicting the choice of $T$.
We have thus obtained a unique weak solution $w$ with $T=\frac{\pi}4$ in \eqref{eq:conclusion}. By the remark at the end of step 1 we obtain for some monotone function $\tilde M$
\[ \Vert w \Vert_{L^4([\tau, \pi/4); L^8(\R^4))} \le \tilde M(\Vert \phi\Vert_{\mathcal{H}^1}).\]
\end{proof}

\subsection{A linearized problem} 

We shall need the following lemma as well.
\begin{lemme}\label{stet} Let $0 \le \tau < \tau+\delta \le \frac{\pi}4$, let
$I= (\tau,\tau+\delta)$, $ \phi \in \mathcal{H}^1$, $F \in L^2(I; \mathcal{W}^{1,\frac43})$,  
\[ \Vert w \Vert_{L^\infty([\tau, \tau+\delta];\mathcal{H}^1) } \le  L \] 
and 
\[ \Vert w \Vert_{L^4([\tau,\tau+\delta];L^8(\R^4))} \le M.  \] 
 Then  
the following problem
$$\begin{cases}
i\partial_t h- Hh -(\cos(2t))^2 (2h|w+\xi|^2+ \bar h (w+\xi)^2)= F\\
h(\tau)=\phi \in {\mathcal H}^1(\R^4)
\end{cases}
$$
has a unique solution $h \in C(I; \mathcal{H}^1) \cap L^4(I; L^8(\R^4))$. In addition 
\begin{equation}\label{StRi}\|h\|_{L^\infty( I;\mathcal H^1)}+\|h\|_{L^4(I;L^8(\R^4))} \leq C(K,L,M )  \big( \|\phi\|_{{\mathcal H}^1(\R^4)} + \|F\|_{{L^{2}(I;{\mathcal W}^{1,\frac 43}(\R^4)})}\big). 
\end{equation}
\end{lemme}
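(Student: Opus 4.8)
The plan is to solve the Cauchy problem by a fixed point argument based on the inhomogeneous Strichartz estimate \eqref{eq:strichartz}, first on a short subinterval and then, since the equation is linear in $h$, on all of $I$ by patching finitely many such pieces together. Write $Z:=w+\xi$ and $G(h):=\cos^2(2t)\bigl(2h|Z|^2+\bar h\,Z^2\bigr)$, so that $h$ solves the stated problem on a subinterval $J=(a,b)\subseteq I$ if and only if it is a fixed point of the affine map
\[ \Phi(h)(t)=e^{-i(t-a)H}h(a)-i\int_a^t e^{-i(t-s)H}\bigl(G(h)(s)+F(s)\bigr)\,ds. \]
For $J\subseteq I$ I would work in the norm $\|h\|_{X(J)}:=\|h\|_{L^\infty(J;\mathcal H^1)}+\|h\|_{L^4(J;L^8(\R^4))}$; by \eqref{eq:strichartz} and linearity of $G$,
\[ \|\Phi(h_1)-\Phi(h_2)\|_{X(J)}\le C\,\|G(h_1-h_2)\|_{L^2(J;\mathcal W^{1,\frac43})},\qquad \|\Phi(0)\|_{X(J)}\le C\bigl(\|h(a)\|_{\mathcal H^1}+\|F\|_{L^2(J;\mathcal W^{1,\frac43})}\bigr), \]
so the whole argument reduces to a linear-in-$h$ bound for $\|G(h)\|_{L^2(J;\mathcal W^{1,4/3})}$.

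The key step is the multilinear estimate: for $J\subseteq I$ with $|J|\le1$ and $\|w\|_{L^4(J;L^8)}\le1$,
\[ \|G(h)\|_{L^2(J;\mathcal W^{1,\frac43}(\R^4))}\le C(K,L)\,\bigl(\|w\|_{L^4(J;L^8)}+|J|^{1/4}\bigr)\,\|h\|_{X(J)}. \]
To obtain it I would expand the $\mathcal W^{1,4/3}$ norm via \eqref{equivspaces}, which reduces matters to bounding, besides the $\bar h Z^2$ terms which are identical in structure, the quantities $\|\langle x\rangle\,h|Z|^2\|_{L^{4/3}_x}$, $\|\nabla h\,|Z|^2\|_{L^{4/3}_x}$ and $\|h\,|Z|\,\nabla Z\|_{L^{4/3}_x}$. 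The first two are controlled by the H\"older split $\tfrac34=\tfrac12+\tfrac18+\tfrac18$ with $\|\langle x\rangle h\|_{L^2}+\|\nabla h\|_{L^2}\lesssim\|h\|_{\mathcal H^1}$, followed by $\|\,\|Z\|_{L^8_x}^2\,\|_{L^2_t(J)}=\|Z\|_{L^4(J;L^8)}^2$ and $\|\xi\|_{L^\infty(\R;L^8(\R^4))}\le CK$, the latter being a consequence of the pointwise decay of $\xi$ recorded in \eqref{eq:stoch4} (which is $8$th power integrable on $\R^4$). For the term $h|Z|\nabla Z$ I would split $\nabla Z=\nabla w+\nabla\xi$: the $\nabla w$ part is bounded by $\|h\|_{L^8_x}\|Z\|_{L^8_x}\|\nabla w\|_{L^2_x}$ and then by $\|\nabla w\|_{L^\infty(I;L^2)}\|h\|_{L^4(J;L^8)}\|Z\|_{L^4(J;L^8)}\le L\,\|h\|_{X(J)}\bigl(\|w\|_{L^4(J;L^8)}+CK|J|^{1/4}\bigr)$, while the $\nabla\xi$ part uses $\|\nabla\xi\|_{L^\infty(\R\times\R^4)}\le K$ from \eqref{eq:stoch4}, $\|h\|_{L^2_x}\lesssim\|h\|_{\mathcal H^1}$, the Sobolev embedding $\mathcal H^1(\R^4)\hookrightarrow L^4(\R^4)$ and the bounds $\|w\|_{L^\infty(I;L^4)}\lesssim L$, $\|\xi\|_{L^\infty(\R;L^4)}\le CK$ to give $\le CK(L+K)\,|J|^{1/2}\,\|h\|_{X(J)}$.

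Granting this, I would choose $\varepsilon=\varepsilon(K,L)>0$ so small that $C\cdot C(K,L)\cdot2\varepsilon^{1/4}\le\tfrac12$ (with $C$ the Strichartz constant above), partition $I=[\tau,\tau+\delta]$ into $N=N(K,L,M)$ consecutive subintervals $I_1,\dots,I_N$ with $|I_j|\le\varepsilon$ and $\|w\|_{L^4(I_j;L^8)}\le\varepsilon$ — possible since $|I|\le\tfrac\pi2$ and $\|w\|_{L^4(I;L^8)}\le M$ — and solve on each $I_j$ by contraction on $X(I_j)$, matching the terminal value of the solution on $I_j$ with the initial value on $I_{j+1}$ (legitimate because weak solutions are weakly continuous into $\mathcal H^1$, as in Section \ref{sec:energy}). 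This produces a solution $h\in C(I;\mathcal H^1)\cap L^4(I;L^8(\R^4))$, and iterating the bound $\|h\|_{X(I_j)}\le2C\bigl(\|h(a_j)\|_{\mathcal H^1}+\|F\|_{L^2(I_j;\mathcal W^{1,\frac43})}\bigr)$ over $j=1,\dots,N$ yields \eqref{StRi} with $C(K,L,M)$ of the shape $(2C)^N$; uniqueness on $I$ follows from uniqueness on each $I_j$. The main obstacle is the multilinear estimate: since $\nabla w$ is only available in $L^\infty_tL^2_x$, no derivative can be placed on the $w$-factors beyond $\dot H^1$, which is what dictates the H\"older splittings above and forces one to use \emph{both} components of the $X$-norm, together with the pointwise, $L^4$ and $L^8$ bounds on $\xi$ furnished by \eqref{eq:stoch4}; the remaining bookkeeping, including the passage from the formal argument to genuine weak solutions, is routine.
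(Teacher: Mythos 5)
Your proposal is correct and follows essentially the same route as the paper: an application of the Strichartz estimate \eqref{eq:strichartz} treating the potential term $2h|w+\xi|^2+\bar h(w+\xi)^2$ perturbatively, with H\"older/Sobolev and the bounds \eqref{eq:stoch4} giving a coefficient of size $C(K,L)\big(\|w\|_{L^4(J;L^8)}+|J|^{1/4}\big)$ that is absorbed on subintervals where both $|J|$ and $\|w\|_{L^4(J;L^8)}$ are small, followed by partitioning $I$ into $N(K,L,M)$ such subintervals and iterating, which yields \eqref{StRi} with a constant of size $C^N$. The only cosmetic difference is that you run an explicit affine contraction for existence where the paper merely invokes an ``iteration procedure,'' so no gap.
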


\begin{proof}
It suffices to prove this lemma  for some small $\tau$ depending on $K$, $L$ and $M$, since we then simply iterate the estimate.  We will choose $\tau$ below. 

By the Strichartz estimate, with an absolute implicit constant, and using that complex conjugates do not affect the estimate, using again $\vvvert . \vvvert$, 
\begin{equation} \label{strichhomgmn}
\vvvert h\vvvert 
\lesssim  \|\phi\|_{{\mathcal H}^1(\R^4)}+  \| |w +\xi|^2 h \|_{L^2(I;{\mathcal W}^{1,\frac 43}(\R^4))}
+  \|F\|_{L^2(I;{\mathcal W}^{1,\frac 43}(\R^4))}.
\end{equation} 
We estimate the terms on the right hand side one by one
\begin{equation} 
\begin{split} \hspace{2cm} & \hspace{-2cm} 
\| \langle x \rangle ( |w+\xi|^2 h ) \|_{L^2(I; L^{\frac43}(\R^4))}  +
\|\nabla  ( |w+\xi|^2 h ) \|_{L^2(I;L^{\frac 43}(\R^4))}
\\ \le  \, & 
C \Big( \delta^{\frac12} K^2+  (\delta^{\frac14} K+ L+M)  \Vert w \Vert_{L^4(I;L^8(\R^4))} \Big) 
\vvvert  h  \vvvert. 
\end{split} 
\end{equation}

Hence going back to \eqref{strichhomgmn} we get
\begin{equation} \label{strichhomgr}
\begin{split} \hspace{2cm} & \hspace{-2cm} 
\vvvert h\vvvert
\lesssim  \Vert \phi \Vert_{\mathcal{H}^1}   +  \|F\|_{L^{2}(I;{\mathcal W}^{1,\frac 43}(\R^4))}
\\  & +
\Big( \delta^{\frac12}  K^2  
+ \Big(\delta^{\frac14} K   
 +   L +M \Big) \Vert w \Vert_{L^4(I;L^8(\R^4))} \Big) 
\vvvert  h  \vvvert.
\end{split}  
\end{equation} 
We fix $\delta>0$ so small that we can absorb the  term with $ \delta^{\frac12}$ (as a function of $K$) on the left hand side. Then with $C$ depending on $K$, $L$ and $M$,
\begin{equation} \label{strichhomgrr}\vvvert h \vvvert \le C\Big(  \|\phi\|_{{\mathcal H}^1(\R^4)}+
\|w\|_{L^{4}(I;L^8(\R^4))} \vvvert h \vvvert 
+   \|F\|_{L^{2}(I;{\mathcal W}^{1,\frac 43}(\R^4))} \Big).
\end{equation}  
As discussed above it suffices to prove the statement for small $\tau$ depending on $K$, $L$ and  $M$. 
If $  \Vert w \Vert_{L^4(I; L^8)} \le \frac1{2C}$ 
we can absorb the middle term on the left hand side and arrive at 
\[ \vvvert h \vvvert \le C (\Vert \phi \Vert_{\mathcal{H}^1} + \Vert F \Vert_{L^2(I; \mathcal{W}^{1,\frac43})}). \]
In this case it is easy to turn this estimate into an iteration procedure to construct a unique solution.

We know from Lemma \ref{approxbu}  that 
$ \Vert w \Vert_{L^4([\tau, \frac{\pi}4); L^8(\R^4))} \le \tilde M(\phi)$,
hence we can split $I$ into at most $ N=        (2C \tilde M(\Vert \phi \Vert_{\mathcal{H}^1}))^{\frac14} $
intervals $I_j= [\tau_j,\tau_{j+1}]$,
\[ \tau = \tau_0 < \tau_1 < \dots <\tau_N = \tau+\delta \] 
so that $ \Vert w \Vert_{L^4(I_j)} \le \frac1{2C}$.  
We repeat the previous estimate $N$ times on these small intervals 
\[ \Vert h \Vert_{L^\infty(I_j;\mathcal{H}^1)} + \Vert h \Vert_{L^4(I_j; L^8(\R^4))}
\le C \big( \Vert h(\tau_{j}) \Vert_{\mathcal{H}^1} 
+ \Vert F \Vert_{L^2(I_j; \mathcal{W}^{1,\frac43})}\big)\] 
and obtain 
\[ \Vert h \Vert_{L^\infty( I; \mathcal{H}^1)}+ \Vert h \Vert_{L^4( I; L^8)}
\le C^N \big( \Vert \phi \Vert_{\mathcal{H}^1} + N^\frac12 \Vert F \Vert_{L^2(I; \mathcal{W}^{1,\frac43})}\big). \]  
\end{proof} 

\subsection{Proof of Proposition \ref{nes4}}

We consider with $ \xi $ satisfying \eqref{eq:stoch4}
the equation 
 \begin{equation}\label{234}
 \begin{cases}
 i\partial_t v - H v =\cos^{2}(2t)|\xi+v|^2(\xi+v), 
 \\
 v(\tau)=\phi\in {\mathcal H}^1(\R^4)
 \end{cases}
 \end{equation}
Proposition \ref{nes4} states that there exists $ \delta >0$ depending only on $K$ and 
$\Vert \phi \Vert_{\mathcal{H}^1} $ so that there is a unique weak solution 
\[   v \in C([\tau, \tau+\delta];\mathcal{H}^1) \cap L^4([\tau, \tau+\delta]; L^8(\R^4)), \]
which is the statement we prove in this subsection. 

 \begin{proof}
Let $w\in C([ \tau, \frac{\pi}4 ]; \mathcal{H}^1(\R^4)) \cap L^4([\tau, \frac{\pi}4];L^8(\R^4))$ be the solution considered in Lemma \ref{approxbu}. It satisfies a bound
\[  \Vert w \Vert_{L^\infty([\tau, \frac{\pi}4];\mathcal{H}^1)} + 
\Vert w \Vert_{L^4([\tau, \frac{\pi}4]; L^8(\R^4))} \le \tilde M( \Vert \phi \Vert_{\mathcal{H}^1}). \]

We look for the equation solved by $v-w=R$, 
 $$
 \begin{cases}
 i\partial_t R -HR=\cos^2(2t)\big[(R+w+\xi)|R+w+\xi|^2-w|w|^2\big]\\
 R(\tau)=0.
 \end{cases}$$
We expand the nonlinearity and we get
\begin{equation}\label{eqR} 
i\partial_t R -HR-\cos^2(2t)\big( \bar R (w+\xi)^2+2R |w+\xi|^2\big)= F 
\end{equation} 
with 
\begin{equation} 
 F(R) =  \cos^2(2t)\Big[
R^2 \bar R+ R^2 (\overline{w+\xi})+2|R|^2 (w+\xi)+
\big(|w+\xi|^2(w+\xi)-w|w|^2\big)\Big].
 \end{equation} 
The important feature are 
\begin{enumerate} 
\item The left hand side is a linear equation for $R$ considered in the previous subsection. 
\item The term $|w|^2w$ cancels on the right hand side. 
\end{enumerate}
We will construct the solution by a standard fix point argument 
\[ R = \Xi(R) \] 
where $ r = \Xi(R) $ is the unique solution to the linear equation  
 \begin{equation}\label{eqtR}
i\partial_t r -H  r-\cos^2(2t)\big( \bar r (w+\xi)^2+2r |w+\xi|^2\big) = F(R) 
\end{equation} 
with initial data $ r(0)=0$.

We aim to apply Lemma \ref{stet} on the time interval $[\tau, \tau+\delta]$. Notice that the forcing term in \eqref{eqR} involves terms quadratic and cubic in $R$
as well as a remainder independent of $R$. 
We first  compute the terms quadratic in $R$ by neglecting as usual the bounded factor $\cos^2(2t)$, in particular we shall estimate $\|R^2 \bar w\|_{L^2([\tau, \tau+\delta]; \mathcal W^{1,\frac43}(\R^4))}$ and
$\|R^2 \overline{\xi}\|_{L^2
([\tau, \tau+\delta];\mathcal W^{1,\frac43}(\R^4))}$. To shorten the notation we denote $I = [\tau,\tau+\delta]$. In the same fashion as above  
\begin{equation}   \||R|^2 w\|_{L^2(I;{\mathcal W}^{1,\frac 43}(\R^4))} +
\|R^2 \bar w \|_{L^2(I;\mathcal{W}^{1,\frac 43}(\R^4))}
  \leq C\vvvert  w\vvvert \vvvert R \vvvert^2. 
\end{equation}  
Next notice that
\begin{equation} \|R^2 \overline{\xi} \|_{L^2(I;\mathcal{W}^{1,\frac43})}
+  \||R|^2 \xi \|_{L^2(I;\mathcal{W}^{1,\frac43})}
\leq C \delta^{\frac14} K  \vvvert R \vvvert^2.\end{equation}  
where we used \eqref{eq:stoch4}. 
Next we estimate the term cubic in $R$:
\begin{equation}\label{aslcubic}
\|\bar R R^2\|_{L^2(I; {\mathcal W}^{1,\frac 43}(\R^4))}\\
\leq C \|R\|_{L^\infty(I;{\mathcal H}^{1}(\R^4))}\|R\|_{L^4(I;L^8(\R^4))}^2
\le C \vvvert R \vvvert^3.
\end{equation}
Finally we compute the last term that appears in the forcing term of \eqref{eqR}. It is crucial that there is no cubic term  of $w$. We obtain (using that the position of complex conjugates does not matter) 
\begin{equation} \begin{split} \label{Forcing} 
\||w+\xi|^2(w+\xi)-w|w|^2\|_{L^2(I;{\mathcal W}^{1,\frac 43})} & \\ & \hspace{-4cm}  \leq    \|\xi|^2\xi \|_{L^2(I;{\mathcal W}^{1,\frac 43})}
 + 3\| |\xi|^2 w\|_{L^2(I;{\mathcal W}^{1,\frac 43})}
+ 3\| |w|^2 \xi\|_{L^2(I;{\mathcal W}^{1,\frac 43})}
\end{split} 
\end{equation} 
and, estimating the terms on the right hand side one by one 
\begin{equation}   \| |\xi|^2\xi\|_{L^2(I;{\mathcal W}^{1,\frac 43})} \le   C\delta^{1/2} K^3, \end{equation} 
\begin{equation} 
\| |\xi|^2 w\|_{L^2(I;{\mathcal W}^{1,\frac 43})}
\le c K^2 \delta^{\frac12} \Vert w \Vert_{L^\infty(I; \mathcal{H}^1)} 
\end{equation} 
\begin{equation} 
\| |w|^2 \xi\|_{L^2(I;{\mathcal W}^{1,\frac 43})}
\le c \delta^{\frac14} K \Vert w \Vert_{L^4(I; L^8)} \Vert w \Vert_{L^\infty(I; \mathcal{H}^1)}.  
\end{equation} 

By combining \eqref{eq:stoch4} with the bounds \eqref{unifL6bound} and \eqref{uniformH1bound}
we get:
\begin{equation}\label{Ftau}
\||w+\xi|^2(w+\xi)-w|w|^2\|_{L^2(I;{\mathcal W}^{1,\frac 43}(\R^4))}
\leq C(K, \Vert \phi \Vert_{\mathcal{H}^1} ) \delta^\frac 14. 
\end{equation}
Gathering together the estimates proved above, 
we apply   Lemma \ref{stet} and  we get (with implicit  constants depending only on $K$ and $ \Vert \phi \Vert_{\mathcal{H}^1}$) 
\begin{equation}
\vvvert \Xi(R) \vvvert   \lesssim  \vvvert R \vvvert^2   +\vvvert R \vvvert^3 
+ \delta^{\frac14}. 
\end{equation} 
and, by polarization 
\[ 
\vvvert \Xi(R_2)- \Xi(R_1) \ \vvvert   \lesssim  \Big(\vvvert R_2 \vvvert + \vvvert R_1 \vvvert + \vvvert R_2 \vvvert^2+ \vvvert R_1 \vvvert^2 \Big)  \vvvert R_2-R_1 \vvvert. 
\] 
If $\delta$ is sufficiently small (depending only on $K$ and $\Vert \phi \Vert_{\mathcal{H}^1}$) standard fixpoint arguments yield a unique solution.  
\end{proof} 

\subsection{Inequality \texorpdfstring{\eqref{eq:contin4}}{}} 

We complete the proof by verifying \eqref{eq:contin4} and recall that we have constructed a solution $v$  to 
\[ i \partial_t v - Hv =\cos^2(2t)  |\xi+v|^2(\xi+v) \] 
where $\xi$ satisfies 
\[ i \partial_t \xi-H\xi = 0 \] 
and the bounds \eqref{eq:stoch4}. Moreover, by Proposition \ref{nes4}     
\[  \Vert v \Vert_{C([0,\frac{\pi}4];\mathcal{H}^1)}  +
 \Vert v \Vert_{L^4((0,\frac{\pi}4); L^8(\R^4))} \le C(K).   \]  
Thus
\[   v(t)  - e^{-i(t-\frac{\pi}4)H} v(\pi/4) =  -\int_t^{\frac{\pi}4} 
e^{-i(t-s)H} \cos^2(2s)  |\xi(s)+v(s)|^2(\xi(s)+v(s) ) ds\]
By the Strichartz estimate
\[ 
\begin{split} \hspace{1cm} & \hspace{-1cm} 
\Big\Vert  \int_t^{\frac{\pi}4} 
e^{-i(t-s)H} \cos^2(2s)  |\xi(s)+v(s)|^2(\xi(s)+v(s) ) ds \Big\Vert_{L^2([\frac{\pi}4-\delta,\frac{\pi}4]; \mathcal{W}^{1,\frac43} )} 
\\ & \le C \cos^2\big(\frac{\pi}2-2\delta\big) 
\Big\Vert  |\xi+v|^2(\xi+v) \Big\Vert_{L^2( [\frac{\pi}4-\delta,\frac{\pi}4]; \mathcal{W}^{\frac43}) }
\\ & \le 4 C \delta^2 ( K^3 + \vvvert v \vvvert^3).  
\end{split} 
\]

\section{Higher dimensions} 
\subsection{The energy critical case for \texorpdfstring{$n\ge 5$}{n>=5}}  
There are only minor changes in the energy critical case $n\ge 5$ and $p = \frac{2n}{n-2}$.
\begin{enumerate} 
\item  The energy bound has been discussed in Remark \ref{rem:energy}.
\item The Strichartz estimate is 
\[\begin{split} \hspace{2cm} & \hspace{-2cm}  \Vert v \Vert_{L^\infty ([-\frac\pi4,\frac\pi4]; \mathcal{H}^1( \R^n))} 
+ \Vert  v \Vert_{L^{2}([-\frac\pi4,\frac\pi4];\mathcal{W}^{1,\frac{2n}{n-2}}(\R^n))}\\ &  \le c \Big[ \Vert v(0) \Vert_{\mathcal{H}^1} + \Vert i\partial_t v - Hv \Vert_{L^2([-\frac\pi4,\frac\pi4]; \mathcal{W}^{1,\frac{2n}{n+2}}(\R^n))}\Big]. 
\end{split} 
\] 
and we use the Sobolev estimate
\[ \Vert v(t)  \Vert_{L^{\frac{2n}{n-2}}(\R^n)} \le C \Vert v(t) \Vert_{\mathcal{H}^1(\R^n)}. \] 
\item For the deterministic equation wellposedness and uniform bounds for 
\[ \Vert w \Vert_{L^{2 \frac{n+2}{n-2}}(\R\times \R^n)} \]  
 have been proven in \cite{Vi} (and hence for all Strichartz norms) 
  replacing the $L^4 L^8$ bound. 
\item We now complete the proof in the same fashion as above.
\end{enumerate} 

\subsection{The mass supercritical and energy subcritical case \texorpdfstring{$2+\frac4n \le  p < 2+ \frac{4}{n-2}$ and $n\geq 5$}{2+2/n < p <= 2+ 4/(n-2)}.} 

This follows from the energy estimate and easier local existence results. 

\bigskip 

\noindent {\bf Acknowledgements} H.K. was partially supported by
the Deutsche Forschungsgemeinschaft (DFG, German Research Foundation) through the
Hausdorff Center for Mathematics under Germany’s Excellence Strategy - EXC-2047/1 -
390685813 and through CRC 1060 - project number 211504053.
N.T. was partially supported by the ANR project Smooth ANR-22-CE40-0017.
N.V. is supported by the PRIN project 2020XB3EFL by MIUR and PRA-2022-11 by the University of Pisa. We thank Shao Liu for the feedback and carefully reading the paper.  

\bibliographystyle{plain}

\bibliography{references.bib}



\end{document}